\newcolumntype{L}{>{$}l<{$}} 
\newcolumntype{C}{>{$}c<{$}} 
\numberwithin{equation}{subsection}
\newtheorem{theorem}{Theorem}[section]
\newtheorem{proposition}[theorem]{Proposition}
\newtheorem{corollary}[theorem]{Corollary}
\newtheorem{lemma}[theorem]{Lemma}
\theoremstyle{remark}
\newtheorem{remark}[theorem]{Remark}
\theoremstyle{definition}
\newtheorem{nothing}[subsubsection]{}
\def\beq{\begin{eqnarray}}
\def\eeq{\end{eqnarray}}
\def\bes{\begin{eqnarray*}}
\def\ees{\end{eqnarray*}}
\newcommand{\Tau}{\mathcal{T}}
\def\omhat{{\bm\omega}}
\def\muhat{{\bm \mu}}
\def\lambdahat{{\bm \lambda}}
\def\calV{{\mathcal{V}}}
\def\t{{\rm type}}
\def\bfG{{\bf G}}
\def\bfT{{\bf T}}
\DeclareMathOperator{\ch}{ch}
\def\w{{\bf w}}
\def\C{\mathbb{C}}
\def\calQ{{\mathcal{Q}}}
\def\calX{{\mathcal{X}}}
\def\calU{{\mathcal{U}}}
\def\calV{{\mathcal{V}}}
\def\calF{{\mathcal{F}}}
\def\calP{\mathcal{P}}
\def\x{\mathbf{x}}
\def\w{\mathbf{w}}
\def\H{\mathbb{H}}
\def\N{\mathbb{Z}_{\geq 0}}
\def\F{\mathbb{F}}
\def\Q{\mathbb{Q}}
\def\T{\mathbb{T}}
\def\Tr{{\rm Tr}}
\def\t{\mathfrak{t}}
\def\calC{{\mathcal C}}
\def\calO{{\mathcal O}}
\def\Z{\mathbb{Z}}
\def\K{\mathbb{K}}
\def\gl{{\mathfrak g\mathfrak l}}
\def\g{{\mathfrak{g}}}
\newcommand{\nc}{\newcommand}
\def\GU{{\rm GU}}
\def\bS{{\bf S}}
\def\bM{{\mathbb{M}}}
\def\bH{{\mathbb{H}}}
\nc{\op}[1]{\mathop{\mathchoice{\mbox{\rm #1}}{\mbox{\rm #1}}
{\mbox{\rm \scriptsize #1}}{\mbox{\rm \tiny #1}}}\nolimits}
\nc{\al}{\alpha}
\nc{\ep}{\varepsilon} \nc{\ga}{\gamma} \nc{\Ga}{\Gamma}
\nc{\la}{\lambda} \nc{\La}{\Lambda} \nc{\si}{\sigma}
\nc{\Sig}{{\Gamma}} \nc{\Om}{\Omega} \nc{\om}{\omega}
\nc{\Frob}{\op{ Frob}}
\nc{\SL}{{\rm SL}} \nc{\GL}{{\rm GL}} \nc{\PGL}{{\rm PGL}}
\nc{\G}{{\rm G}}
\def\x{{\bf x}}
\nc{\cpt}{{\op{cpt}}} \nc{\Dol}{{\op{Dol}}} \nc{\DR}{{\op{DR}}}
\nc{\B}{{\op{B}}} \nc{\Triv}{\op{Triv}} \nc{\Hod}{{\op{Hod}}}
\nc{\Log}{{\op{Log}}} \nc{\Exp}{{\op{Exp}}} \nc{\Est}{E_{\op{st}}}
\nc{\Hst}{H_{\op{st}}} \nc{\Left}[1]{\hbox{$\left#1\vbox to
  10.5pt{}\right.\nulldelimiterspace=0pt \mathsurround=0pt$}}
\nc{\Right}[1]{\hbox{$\left.\vbox to
  10.5pt{}\right#1\nulldelimiterspace=0pt \mathsurround=0pt$}}
\nc{\LEFT}[1]{\hbox{$\left#1\vbox to15.5pt{}\right.\nulldelimiterspace=0pt \mathsurround=0pt$}}
\nc{\RIGHT}[1]{\hbox{$\left.\vbox to
 15.5pt{}\right#1\nulldelimiterspace=0pt \mathsurround=0pt$}}
\nc{\bee}{{\bf E}} \nc{\bphi}{{\bf \Phi}}
\begin{document}

\title{Ennola duality for  decomposition of  tensor products}

\author{Emmanuel Letellier \\ {\it
  Universit\'e Paris Cit\'e, IMJ-PRG} \\ {\tt
  emmanuel.letellier@imj-prg.fr}\and Fernando Rodriguez-Villegas
  \\ 
{\it  ICTP Trieste} \\ {\tt villegas@ictp.it}  }

 \pagestyle{myheadings}

\maketitle

\begin{abstract} 
  Ennola duality relates the character table of the finite unitary
  group $\GU_n(\F_q)$ to that of $\GL_n(\F_q)$ where we replace $q$ by
  $-q$ (see \cite{E} for the original observation and \cite{LS} for
  its proof). The aim of this paper is to investigate Ennola duality
  for the decomposition of tensor products of irreducible
  characters. It does not hold just by replacing $q$ by $-q$. The main
  result of this paper is the construction of a family of two-variable
  polynomials $\Tau_\muhat(u,q)$ indexed by triples of partitions of
  $n$ which interpolates between multiplicities in decompositions of tensor
  products of unipotent characters for $\GL_n(\F_q)$ and
  $\GU_n(\F_q)$.  We give a module theoretic interpretation of these
  polynomials and deduce that they have non-negative integer
  coefficients. We also deduce that the coefficient of the term of
  highest degree in $u$ equals the corresponding Kronecker coefficient
  for the symmetric group and that the constant term in $u$ give
  multiplicities in tensor products of generic irreducible characters
  of unipotent type (i.e. unipotent characters twisted by linear
  characters of $\GL_1(\F_q)$).
 
\end{abstract}

\newpage
\tableofcontents
\newpage

\section{Introduction}

Let $G=\GL_n(\overline{\F}_q)$ and consider the
two geometric Frobenius endomorphisms
$$
F:G\rightarrow G, \qquad (g_{ij})\mapsto (g_{ij}^q)
$$
 and
$$
F':G\rightarrow G, \qquad  g\mapsto F({^t}g^{-1}).
$$ 
One of the main goals of this work is to study decomposition of tensor products of irreducible characters for
the finite groups
$$
\GL_n(\F_q)=G^F, \qquad \qquad \GU_n(\F_q)=G^{F'}
$$
and compare them.

Ennola duality states that one can obtain the character table of
$\GU(\F_q)$ from that of $\GL_n(\F_q)$ by essentially replacing
$q$ by $-q$. (Ennola's conjecture was proved by Lusztig and Srinivasan
in~\cite{LS}). A natural question is then:
\begin{center}
{\it To what extent does Ennola duality extend to the character rings
  of $\GL_n(\F_q)$ and $\GU_n(\F_q)$?} 
\end{center}
Examples show that simply replacing $q$ by $-q$ does not preserve the
multiplicities of the tensor product of characters of $\GL_n(\F_q)$ and
their counterparts of $\GU_n(\F_q)$. For example, for $n=4$, thanks to the
tables in~\cite{mattig}, we see that
$$
\left\langle {\rm St}\otimes{\rm St},{\rm
    St}\right\rangle_{G^F}=q^3+2q+1,\hspace{1cm}\left\langle {\rm
    St}\otimes{\rm St},{\rm St}\right\rangle_{G^{F'}}=q^3+1 
$$
where ${\rm St}$ denotes the Steinberg character. Therefore, if there
is some extension of Ennola duality to the character rings it must be
more involved.

Since 

$$
\left\langle \calX_1\otimes\calX_2,\calX_3\right\rangle=\left\langle\calX_1\otimes\calX_2\otimes\calX_3^*,1\right\rangle
$$
where $\calX_3^*$ is the dual character, we will study multiplicities of the form $\left\langle\calX_1\otimes\cdots\otimes\calX_k,1\right\rangle$ for a $k$-tuple of irreducible characters of either $\GL_n(\F_q)$ or $\GU_n(\F_q)$.
\bigskip

Our first result is that for {\it generic} $k$-tuples of irreducible characters the
situation is straightfoward: the multiplicities for the tensor product
of an arbitrary number of such characters are given by certain
polynomials $V_\omhat(q)$ and $V'_\omhat(q)$ respectively, which satisfy
$$
V'_\omhat(q)=\pm V_\omhat(-q)
$$
with an explicit sign (see Corollary~\ref{Ennola-gen} for the precise
formulation). As we see in the above example, a formula of this sort
does not hold for arbitrary characters.

Our second result (see Theorem \ref{maintheo}) is that the polynomials
$V_\omhat(t)$ and $V'_\omhat(t)$ are obtained from a $q$-graded
$\C[\bS_n\times \langle\iota\rangle]$-module ${\mathbb M}^\bullet_n$,
where~$\iota$ is an involution and
$\bS_n:=(S_n)^k=S_n\times \cdots \times S_n$. Namely,
$$
\mathbb{M}^j_n:=H_c^{2j+d_n}(\calQ_n,\C)\otimes(\varepsilon^{\boxtimes k}),
$$
where $\varepsilon$ is the sign representation of $S_n$ and where $\calQ_n$ is a certain {\it generic} non-singular
irreducible affine algebraic (quiver) variety of dimension $d_n$.
\bigskip

In order to state our third main result we need to set some
notation.  For a partition $\mu$ of $n$ let $\calU_\mu,\calU'_\mu$ be
the corresponding unipotent character of $G^F$ and $G^{F'}$
respectively (the Steinberg character corresponds to the partition
$(1^n)$).  In \cite{L2} Letellier proved that, for any multi-partition
$\muhat=(\mu^1,\dots,\mu^k)$ of $n$ the multiplicity

\begin{equation}
\label{U-defn}
U_\muhat(q):=\left\langle\calU_{\mu^1}\otimes
\cdots\otimes\calU_{\mu^k},1\right\rangle_{G^F},
\end{equation}
can be computed in terms of the master series $\Omega$ of~\cite{HLRV2}
and~\cite{HLRV}  as follows

\begin{equation}
\label{GL-eqn}
1+\sum_{n>0}\sum_\muhat U_\muhat(q)\, s_\muhat
T^n=\Exp\left(\Psi\right),
\qquad
\Psi:=(q-1)\Log\left(\Omega\right) =\sum_{n>0}\sum_\muhat V_\muhat(q)\, s_\muhat T^n,
\end{equation}
where $\muhat$ runs through $k$-tuples of partitions of $n$. Here $V_\mu(q)$
are the multiplicities (as in~\eqref{U-defn}) for {\it generic}
unipotent characters (i.e., twisted by appropriate $1$-dimensional
characters) and $s_\muhat$ denote the multi-Schur function
$s_\muhat:=s_{\mu^1}(\x^1)\cdots s_{\mu^k}(\x^k)$ in the ring of
symmetric function $\Lambda=\Lambda(\x^1,\ldots,\x^k)$ in the $k$ sets
of infinitely many variables $\x^1,\ldots, \x^k$ (see \S \ref{comb}).

To obtain the corresponding relation for $\GU_n(\F_q)$, we introduce an
extra variable $u$ and define $\Tau_n(x;u,q)\in \Lambda[u,q]$ by
\begin{equation}
\label{GL-u}
\Exp\left(u\Psi \right)=1+u\sum_{n\geq 1} \Tau_n(x;u,q)\, T^n.
\end{equation}
For convenience we also define
$$
\Tau_\muhat(u,q):=\langle\Tau_n(u,q),s_\muhat\rangle
$$
for a multipartion $\muhat$. We prove that $\Tau_\muhat(u,q)$ are  polynomials in the variables $u$ and $q$ with non-negative integer coefficients (see Formula (\ref{tau-for})).

In this setup the identity~\eqref{GL-eqn} is the following statement (see Theorem \ref{K-coeff}(i))
\begin{equation}
\label{V-Tau}
V_\muhat(q)=\Tau_\muhat(0,q),\qquad \qquad U_\muhat(q)=\Tau_\muhat(1,q).
\end{equation}

Let now
$$
U'_\muhat(q):=\left\langle\calU'_{\mu^1}\otimes
\cdots\otimes\calU'_{\mu^k},1\right\rangle_{G^{F'}}
$$
be the multiplicities for unipotent characters of the unitary group
$\GU_n(\F_q)$. 

Our third result is the following, which we can consider
as the version of Ennola duality for the character rings of $\GL_n$
and $\GU_n$ over finite fields. We have (see Theorem \ref{EnnolaII})
$$
U'_\muhat(q)=\pm\Tau_\muhat(-1,-q)
$$
(with an explicit formula for the sign).
\bigskip

As an illustration of our results, here is a list of a few values of
$$
\tau_n:=\langle\Tau_n,s_{1^n}(\x^1)s_{1^n}(\x^2)\rangle
$$
with~$k=3$ (so a symmetric function in one remaining set of infinitely
many variables).  We give these in two different formats for better
readability:
$$
\begin{array}{c|l}
n& \tau_n\\
\hline
2& us_{2}+s_{1^2} \\
3& u^2s_{3}+ (u + 1)s_{2\cdot1}+ (u + q)s_{1^3} \\
4& u^3s_{4}+ (u^2 + u + 1)s_{3\cdot1}+(2u + q)s_{2^2}+ 
(q^2 + uq + q + u^2 + u + 1)s_{2\cdot1^2}\\
&\quad +(uq +   u + q^3 + q)s_{1^4} 
\end{array}
$$
$$
\begin{array}{c|l}
n& \tau_n\\
\hline
2& us_{2}+s_{1^2} \\
3& u^2s_{3}+ u(s_{2\cdot1} + s_{1^3})+  qs_{1^3} + s_{2\cdot1}\\
4& u^3s_{4}+ u^2(s_{3\cdot1}+s_{2\cdot12}) + uq(s_{2\cdot1^2}+s_{1^4})
+u(s_{3\cdot1}+2s_{2^2} +   s_{2\cdot1^2} +s_{1^4})\\
&\quad + q^3s_{1^4}
+q^2s_{2\cdot1^2}
+q(s_{2^2} +   s_{2\cdot1^2} +s_{1^4})
+s_{3\cdot1} +   s_{2\cdot1^2} 
\end{array}
$$
For example, we have $\langle\tau_4,s_{1^4}(\x^3)\rangle=uq +   u + q^3 +
q$. Evaluating this polynomial at $u=0,1,-1$ we find
$$
u=0\qquad q^3+q; \qquad \qquad u=1\qquad q^3+2q+1;\qquad \quad u=-1,\qquad q^3 -1,
$$
matching the respective values of 
$$
V_{1^4,1^4,1^4}(q),\qquad U_{1^4,1^4,1^4}(q), \qquad -U'_{1^4,1^4,1^4}(-q),
$$
 in the tables in~\S\ref{examples}.

 In our fourth and final result, we show (see Theorem~\ref{K-coeff})
 that the coefficient of $u^{n-1}$ in $\Tau_\muhat(u,q)$ (the largest
 possible power of $u$, so basically evaluating $\Tau_\muhat$ at
 $u=\infty$) is independent of $q$ and equals the Kronecker
 coefficient
$$
\langle\chi^{\mu^1}\otimes\cdots\otimes\chi^{\mu^k},1\rangle_{S_n},
$$
where $\muhat=(\mu^1,\dots,\mu^k)$ and where $\chi^{\mu^i}$ denotes the irreducible character of $S_n$ corresponding to the partition $\mu^i$ (the character $\chi^{(1^n)}$ being the sign character).
\bigskip

In this paper we are interested in the two extreme cases, namely the multiplicities for generic characters and the multiplicities for unipotent characters (the least generic). There are also the intermediate cases studied by T. Scognamiglio \cite{Sco} who introduced the technical notion of level of genericity (at least for the split characters). The introduction of the variable $u$ make also sense for these intermediate cases. This is not more complicated as what we do here for unipotent characters but these intermediate cases are much more technical to define. 
\bigskip

A natural question (suggested by the results of this paper) is whether
the polynomials $\Tau_\muhat(u,q)$ are structure coefficients of some
based ring, which
would be a $u$-deformation of the character ring of $\GL_n(\F_q)$.
More precisely, by the previous work of Hausel-Letellier-Villegas
\cite{HLRV1}, Letellier \cite{L1}\cite{L2} and Scognamiglio \cite{Sco}
we may reconstruct (the structure coefficients of) the character ring
of the groups $\GL_n(\F_q)$ (where $n$ runs over $\mathbb{N}^*$) from
the generic structure coefficients of a given type (for instance the
semisimple regular, or the semisimple split, or the unipotent
type). With this we can define $u$-deformations of all the structure
coefficients of the character ring of the $\GL_n(\F_q)$'s and speculate
whether they are the structure coefficients of some based ring.

As a further speculation, we may ask whether this $u$-deformation is
related to that between Bosons ($u=1$) and Fermions ($u=-1$) in
physics~\cite{Za92}.


\bigskip

\paragraph{Acknowledgements:} A part of this work was done while the
first author was visiting the Sydney Mathematical Research
Institute. The first author is grateful to the SMRI for the wonderful
research environment and their generous support.  The second author
would like to thank the Universit\'e Paris Cit\'e, where this work was
started, for its hospitality.

\section{Preliminaries}

Let $G$ denotes $\GL_n(\overline{\F}_q)$ and consider the two Frobenius endomorphisms

$$
F:G\longrightarrow G,\hspace{.5cm} (a_{ij})\mapsto (a_{ij}^q),\hspace{1cm}F':G\longrightarrow G,\hspace{.5cm}(a_{ij})\mapsto{^t}(a_{ij}^q)^{-1}.
$$
We let $\ell$ be a prime which does not divide $q$. We will consider representations of finite groups over $\overline{\Q}_\ell$-vector spaces and for a finite group $H$ we denote by $\widehat{H}$ its set of irreducible characters.  For a field $k$ we denote by $k^*$ the group of non-zero elements.

\subsection{Combinatorics}\label{comb}
\bigskip

{\bf Partitions, types, symmetric functions}
\bigskip

We denote by $\calP$ the set of all partitions of integers including the unique partition $0$, by $\calP_n$ the set of partitions of $n$. Partitions $\lambda$ are denoted by $\lambda=(\lambda_1,\lambda_2,\dots)$, where $\lambda_1\geq\lambda_2\geq\cdots\geq 0$. We will also write a partition $\lambda$ as $(1^{m_1},2^{m_2},\dots)$ where $m_i$ denotes the multiplicity of $i$ in $\lambda$. 
\bigskip

For a partition $\lambda$ of $n$, we denote by $\chi^\lambda$ the corresponding  irreducible character of $S_n$ (the partition $(n^1)$ corresponds to the trivial character and the partition $(1^n)$ corresponds to the sign character).

\bigskip

We will denote by $|\lambda|=\sum_i\lambda_i$ the size of $\lambda$ and by $\lambda^*$ the dual partition of $\lambda$. We will put

$$
n(\lambda):=\sum_{i>0}(i-1)\lambda_i.
$$
A \emph{type} is a function 

$$
\omega:\Z_{>0}\times(\calP\backslash\{0\})\longrightarrow \N
$$
with finite support. We will write $\omega$  as

$$
\omega=\{(d_i,\omega^i)^{m_i}\}_i
$$
where $m_i$ denotes the image of $(d_i,\omega^i)\in\Z_{>0}\times(\calP\backslash\{0\})$.

The size of $\omega$ is defined as 

$$
|\omega|:=\sum_im_id_i|\omega^i|.
$$
and we denote by $\omega^*$ the dual type $\{(d_i,\omega^{i*})^{m_ i}\}_ i$.

We denote by $\T_n$ the set of types of size $n$ and for a type $\omega=\{(d_i,\omega^i)^{m_i}\}_i$ we introduce 

\begin{equation}
n(\omega):=\sum_im_id_i n(\omega^i),\hspace{1cm}r(\omega):=n+\sum_im_i|\omega^i|,\hspace{1cm}r'(\omega):=\lceil n/2\rceil+\sum_im_i|\omega^i|.
\label{r}\end{equation}
For an infinite set of commuting variables $\x=\{x_1,x_2,\dots\}$, we denote by $\Lambda(\x)$ the ring of symmetric functions in the variables of $\x$. It is equiped with the Hall pairing $\langle\,,\,\rangle$ that makes the Schur symmetric functions $\{s_\lambda(\x)\}$ an orthonormal basis.
\bigskip

The transformed Hall-Littlewood symmetric function $\tilde{H}_\lambda(\x;q)\in\Lambda(\x)\otimes_\Z\Q(q)$ is defined as

$$
\tilde{H}_\lambda(\x;q):=\sum_\nu\tilde{K}_{\nu\lambda}(q)s_\nu(\x)
$$
where $\tilde{K}_{\nu\lambda}(q)=q^{n(\lambda)}K_{\nu\lambda}(q^{-1})$ are the transformed Kostka polynomials \cite[III (7.11)]{macdonald}.

\bigskip

Given a family of symmetric functions $u_\lambda(\x;q)\in\Lambda(\x)\otimes_\Z\Q(q)$ indexed by partitions $\lambda$ (with $u_0=1$), we extend it to a type $\omega=\{(d_i,\omega^i)^{m_i}\}$ by

$$
u_\omega(\x;q)=\prod_iu_{\omega^i}(\x^{d_i};q^{d_i})^{m_i}
$$
where $\x^d$ denotes the set of variables $\{x_1^d,x_2^d,\dots\}$.
\bigskip

Consider now $k$ separate sets $\x_1,\x_2,\dots,\x_k$ of infinitely many variables and denote by 

$$
\Lambda=\Q(q)\otimes_\Z\Lambda(\x_1)\otimes_\Z\cdots\otimes_\Z\Lambda(\x_k)
$$
the ring of functions separately symmetric in each set $\x_1,\x_2,\dots,\x_k$ with coefficients in $\Q(q)$. Denote by $\langle\,,\,\rangle_i$ the Hall pairing on $\Lambda(\x_i)$ and consider
the Hall pairing
$$
\langle\,,\,\rangle:=\prod_i\langle\,,\,\rangle_i
$$
on $\Lambda$.
\bigskip

Given a family of functions $u_\lambda(\x_1,\dots,\x_k,q)\in\Lambda$ indexed by partitions with $u_0=1$. We extend its definition to a type $\tau=\{(d_i,\tau^i)^{m_i}\}_{i=1,\dots,r}\in\T_n$ by 

$$
u_\tau(\x_1,\dots,\x_k,q):=\prod_{i=1}^ru_{\tau^i}(\x_1^{d_i},\dots,\x_k^{d_i},q^{d_i}).
$$

{\bf Exp and Log}
\bigskip

\noindent Consider $$\psi_n:\Lambda[[T]]\rightarrow\Lambda[[T]],\, f(\x_1,\dots,\x_k;q,T)\mapsto f(\x_1^n,\dots,\x_k^n;q^n,T^n).$$ The $\psi_n$ are called the \emph{Adams operations}.

Define $\Psi:T\Lambda[[T]]\rightarrow T\Lambda[[T]]$ by $$\Psi(f)=\sum_{n\geq 1}\frac{\psi_n(f)}{n}.$$Its inverse is given by $$\Psi^{-1}(f)=\sum_{n\geq 1}\mu(n)\frac{\psi_n(f)}{n}$$where $\mu$ is the ordinary M\"obius function. 

Define $\Log:1+T\Lambda[[T]]\rightarrow T\Lambda[[T]]$ and its inverse $\Exp:T\Lambda[[T]]\rightarrow 1+\Lambda[[T]]$ as 

$$\Log(f)=\Psi^{-1}\left(\log(f)\right)$$and $$\Exp(f)=\exp\left(\Psi(f)\right).$$

\begin{remark}The map $T\mapsto -T$ is not preserved under $\Log$ and $\Exp$ as 

$$
1+q^iT^j=(1-q^{2i}T^{2j})/(1-q^iT^j).
$$
\label{-}\end{remark}

For a type $\tau=\{(d_i,\tau^i)^{m_i}\}\in\T_n$, we put

\begin{equation}
c_\tau^o:=\begin{cases}\frac{(-1)^{r-1}\mu(d)(r-1)!}{d\prod_i m_i!}&\text{ if for all } i, d_i=d,\\
0&\text{ otherwise.}\end{cases}
\label{ctau}\end{equation}

By \cite[Formula (2.3.9)]{HLRV} we have the following.

\begin{proposition}Assume given a family of functions  $u_\lambda=u_\lambda(\x_1,\dots,\x_k;q)\in\Lambda$ is indexed by partitions with $u_0=1$. Then
\begin{equation}\Log\left(\sum_{\lambda\in \calP}u_\lambda T^{|\lambda|}\right)=\sum_\tau c_\tau^o u_\tau T^{|\tau|}
\label{Log}\end{equation}
where $\tau$ runs over the set of types of size larger or equal to $1$.
\end{proposition}
\bigskip

We also recall the following result of Mozgovoy \cite[Lemma 22]{Mozgovoy}.
\bigskip

For $h\in \Lambda$ and $n\geq 1$ we put 

$$
h_n:=\frac{1}{n}\sum_{d | n}\mu(d)\psi_{\frac{n}{d}}(h).
$$
This is the M\"obius inversion formula of 

$$
\psi_n(h)=\sum_{d| n}d\cdot h_d.
$$

\begin{lemma} Let $h\in \Lambda$ and $f_1,f_2\in 1+T\Lambda[[T]]$ such that

$$
\log\,(f_1)=\sum_{d=1}^\infty h_d\cdot\log\,(\psi_d(f_2)).
$$
Then 
$$
\Log\,(f_1)=h\cdot \Log\,(f_2).
$$
\label{moz}\end{lemma}

{\bf Cauchy function}

\bigskip

\noindent  The $k$-points Cauchy function is
defined as
\begin{equation}\Omega(q)=\Omega(\x_1,\dots,\x_k,q;T):=\sum_{\lambda\in\calP}\frac{1}{a_\lambda(q)}\left(\prod_{i=1}^k\tilde{H}_{\lambda}(\x_i,q)\right)T^{|\lambda|}\in1+T\Lambda[[T]]  
\label{Cauchyfunction}\end{equation} 
where $a_\lambda(q)$ is a polynomial in $q$ which gives the cardinality of the centralizer of a
unipotent element of $\GL_n(\F_q)$ with Jordan form of type $\lambda$
\cite[IV, (2.7)]{macdonald}.

For a family of symmetric functions $u_\lambda(\x;q)$ indexed by partitions and  a multi-type $\omhat=(\omega_1,\dots,\omega_k)\in\big(\T_n\big)^k$, we put 
$$
u_{\omhat}:=u_{\omhat_1}(\x_1,q)\cdots u_{\omhat_k}(\x_k,q)\in\Lambda.
$$

For $\omhat=(\omega_1,\dots,\omega_k)\in\big(\T_n\big)^k$, with $\omega_i=\{(d_{ij},\omega_i^j)^{m_{ij}}\}_{j=1,\dots,r_i}$, define

\begin{align*}
\H_\omhat(q):&=(q-1)\left\langle \Log\,\Omega(q),s_\omhat\right\rangle\\
&=(q-1)\sum_{\tau\in\T_n}c_\tau^o\frac{1}{a_\tau(q)}\left\langle\prod_{i=1}^k\tilde{H}_\tau(\x_i;q),s_\omhat\right\rangle.\end{align*}

where  $\left\langle \Log\,\Omega(q),s_\omhat\right\rangle$ is the Hall pairing of $s_\omhat$ with the coefficient of $\Log\,\Omega(q)$ in $T^n$.
 \bigskip
 
 The term $a_\tau(q)=\prod_ia_{\tau^i}(q^{d_i})$ is the cardinality of the centralizer in $\GL_{|\tau|}(\F_q)$ of an element of type $\tau$.

\subsection{The characters of $\GL_n(\F_q)=G^F$}\label{typeG}
\bigskip

{\bf Conjugacy classes}
\bigskip

 \noindent Let $\Xi$ denote the set of $F$-orbits of
$\overline{\F}_q^*=\GL_1(\overline{\F}_q)$ and for an integer $m\geq 0$, we denote by
$\calP_m(\Xi)$ the set of all maps $f:\Xi\rightarrow\calP$ such that

$$
|f|:=\sum_{\xi\in\Xi}|\xi|\, |f(\xi)|=m
$$
where $|\xi| $ denotes the size of the $F$-orbit $\xi$. The set $\calP_n(\Xi)$ parametrizes naturally  the set of conjugacy classes of $G^F$ using Jordan decomposition. For $f\in\calP_n(\Xi)$, we denote by $C_f$ the corresponding conjugacy class of $G^F$.
\bigskip

For instance, the conjugacy classes of 

$$
\left(\begin{array}{cccc}
x&1&0&0\\
0&x&0&0\\
0&0&x^q&1\\
0&0&0&x^q\end{array}\right)
$$
with $x\in\F_{q^2}\backslash\F_q$, corresponds to $\Xi\rightarrow\calP$ that maps the $F$-orbit $\{x,x^q\}$ to the partition $(2^1)$ and the other $F$-orbits to $0$.
\bigskip

For $f\in\calP_m(\Xi)$ and  a pair $(d,\lambda)\in \Z_{>0}\times(\calP\backslash\{0\})$, we put

$$
m_{d,\lambda}:=\#\{\theta\in\Theta\,|\, |\theta| =d, f(\theta)=\lambda\}
$$
The collection of the multiplicities $m_{d,\lambda}$ defines a type $\t(f)\in\T_m$ called the \emph{type} of $f$. 
\bigskip

For example, the elements of  $\T_2$ are $(1,1)^2$, $(2,1)$, $(1,1^2)$ and $(1,2^1)$ and are the types of the following kind of matrices (up to conjugacy in $\GL_2(\overline{\F}_q)$)
\bigskip

$$
\left(\begin{array}{cc}a&0\\0&b\end{array}\right),\,\,\left(\begin{array}{cc}x&0\\0&x^q\end{array}\right),\,\,\left(\begin{array}{cc}a&0\\0&a\end{array}\right),\,\, \left(\begin{array}{cc}a&1\\0&a\end{array}\right)
$$
where $a\neq b\in\F_q^*$, $x\in\F_{q^2}\backslash\F_q$.
\bigskip

{\bf Irreducible characters}
\bigskip

\noindent We now review the parametrization of the irreducible characters.
\bigskip

\noindent For each integer $r>0$ we denote by $\F_{q^r}$ the unique subfield of $\overline{\F}_q$ of cardinality $q^r$. 
\bigskip

For integers $r$ and $s$ such that $r\mid s$ we have the norm map

$$
N_{r,s}:(\F_{q^s})^*\longrightarrow(\F_{q^r})^*,\hspace{1cm}x\mapsto x^{q^s-1/q^r-1}
$$
which is surjective.

It induces an injective map $\widehat{\F_{q^r}^*}\rightarrow\widehat{\F_{q^s}^*}$ and we consider the direct limit

$$
\Gamma=\varinjlim \widehat{\F_{q^r}^*}
$$
of the $\widehat{\F_{q^r}^*}$ via these maps. The Frobenius automorphism $F$ acts on $\Gamma$ by $\alpha\mapsto\alpha^q$ and we denote by $\Theta$ the set of $F$-orbits of $\Gamma$.

For an integer $m\geq 0$, we denote by $\calP_m(\Theta)$ the set of all maps $f:\Theta\rightarrow\calP$ such that

$$
|f|:=\sum_{\theta\in\Theta}|\theta|\, |f(\theta)|=m
$$
where $|\theta|$ denotes the size of the $F$-orbit $\theta$. 
\bigskip

As for $\calP_m(\Xi)$, we define a type $\t(f)\in\T_m$ for any $f\in\calP_m(\Theta)$.
\bigskip

The irreducible complex characters of $G^F$ are naturally parametrized by the set $\calP_n(\Theta)$ as we now recall.
\bigskip

For $f\in\calP_n(\Theta)$, we recall (see \cite{LS}) the construction of the corresponding irreducible character $\calX_f$ using Deligne-Lusztig theory.

Consider

$$
L_f^F:=\prod_{\theta\in\Theta, f(\theta)\neq 0}\GL_{|f(\theta)|}(\F_{q^{|\theta|}})
$$
This is the group of $\F_q$-points of an $F$-stable  Levi subgroup $L_f$ of (some parabolic subgroup of) $\GL_n(\overline{\F}_q)$. Choose a representative $\dot{\theta}$ of each $\theta\in\Theta$ such that $f(\theta)\neq 0$. The collection of the $\dot{\theta}$ composed with the determinant defines a linear character $\theta_f$ of $L_f^F$ while the collection of partitions $f(\theta)$ define a unipotent character $\calU_f$ of $L_f^F$ as follows. 

We get the corresponding unipotent character $\calU_\mu$ of $\GL_m(\F_q)$ as

\begin{equation}
\calU_\mu=\frac{1}{|S_m|}\sum_{w\in S_m}\chi^\mu(w) R_{T_w^F}^{\GL_m^F}(1)
\label{almostbis}\end{equation}
where $T_w$ is an $F$-stable maximal torus of $\GL_m$ obtained by twisting the torus of diagonal matrices by $w$ and where $R_{T_w^F}^{\GL_m^F}(1)$ is the Deligne-Lusztig induced of the trivial character.

Then $\calU_f$ is the external tensor product of the $\calU_{f(\theta)}$ where $\theta$ runs over the set $\{\theta\in\Theta\,|\, f(\theta)\neq 0\}$.

By \cite{LS} we have the following.

\begin{equation}
\calX_f=(-1)^{r(f)}R_{L_f^F}^{G^F}(\theta_f\cdot\calU_f),
\label{LS1}\end{equation}
where $r(f):=r(\t(f))$ is given by Formula (\ref{r}) and where for any $F$-stable Levi subgroup $L$ of $G$, we denote by
$R_{L^F}^{G^F}$ the Lusztig induction studied for instance in
\cite{DM}. Notice that $\sum_\theta |f(\theta)|$ is the $\F_q$-rank of
$L_f$ and that the right hand side of (\ref{LS1}) does not depend on
the choice of the representatives $\dot{\theta}$.

We will say that $(L_f^F,\theta_f,\calU_f)$ is a triple defining $\calX_f$.
\bigskip

For any irreducible character $\calX=\calX_h$, with $h\in\calP_n(\Theta)$, we define the character

$$
\tilde{\calX}:=(-1)^{r(h)}R_{L_h^F}^{G^F}(\calU_h).
$$
It does not depend on $\theta_h$ (it depends only on the type of $h$), it is not irreducible in general and takes the same values as $\calX$ at unipotent elements.

\begin{theorem}Let $\calX$ be an irreducible character of type $\omega$.

(1) For any conjugacy class $C$ of type $\tau$, we have

$$
\tilde{\calX}(C)=(-1)^{r(\omega)}\left\langle \tilde{H}_\tau(\x;q),s_\omega(\x)\right\rangle.
$$

(2) In particular

$$
\calX(1)=\tilde{\calX}(1)=\frac{q^{n(\omega)}\prod_{i=1}^n(q^i-1)}{H_\omega(q)},
$$
where for a partition $\lambda$, $H_\lambda(q)=\prod_{s\in\lambda}(q^{h(s)}-1)$ is the hook polynomial \cite[Chapter I, Part 3, Example 2]{macdonald}.

\label{xtilde}\end{theorem}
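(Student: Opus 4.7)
The plan is to reduce (1) to Green's classical formula on the character values of $\GL_n(\F_q)$ and then to deduce (2) by specializing the conjugacy class to the identity. First I would observe that $\tilde\calX$ depends only on the type $\omega=\t(h)$: the Levi $L_h^F$ is determined up to $G^F$-conjugacy by the multiplicities $m_{d,\lambda}$, the product of unipotent characters $\calU_h$ depends only on the partitions $f(\theta)$, the linear character $\theta_f$ is dropped in the definition of $\tilde\calX$, and $(-1)^{r(h)}=(-1)^{r(\omega)}$.

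In the single-partition case $\omega=\mu$ we have $L_h=\GL_n$ and $\tilde\calX=(-1)^{r(\mu)}\calU_\mu$. Using the expansion (\ref{almost}), the value of $R_{T_w^F}^{\GL_n^F}(1)$ on a unipotent element of Jordan type $\nu$ is the Green polynomial $Q^\nu_w(q)$, and averaging against $\chi^\mu(w)/|S_n|$ yields the modified Kostka polynomial $\tilde K_{\mu\nu}(q)=\langle\tilde H_\nu(\x;q),s_\mu(\x)\rangle$ via the classical Kostka--Foulkes identity (see \cite[III.7]{macdonald}). This establishes (1) for unipotent characters evaluated at unipotent classes.

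For an arbitrary type $\omega$ and an arbitrary class $C$ of type $\tau$, I would pick $g\in C$ with Jordan decomposition $g=su$. The character formula for Lusztig induction (see \cite{DM}) evaluates $R_{L_h^F}^{G^F}(\calU_h)(g)$ as a sum over $G^F$-conjugates of $s$ lying in $L_h^F$, each contributing a product indexed by the $F$-orbits $\theta$ with $f(\theta)\neq 0$ of unipotent character values $\calU_{f(\theta)}$ on the $\theta$-piece of $u$ inside $\GL_{|f(\theta)|}(\F_{q^{|\theta|}})$. Applying the unipotent case block-by-block, each piece contributes $\langle\tilde H_{\tau_\theta}(\x^{|\theta|};q^{|\theta|}),s_{f(\theta)}(\x^{|\theta|})\rangle$, and assembling these factors via the plethystic rule $u_\omega(\x;q)=\prod_i u_{\omega^i}(\x^{d_i};q^{d_i})^{m_i}$ yields the global pairing $\langle\tilde H_\tau(\x;q),s_\omega(\x)\rangle$. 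The sign $(-1)^{r(h)}=(-1)^{r(\omega)}$ built into the definition of $\tilde\calX$ supplies the prefactor in (1). For (2), the identity element has type $\tau=(1,(1^n))$, so $\tilde H_\tau=\tilde H_{(1^n)}(\x;q)$, and a standard specialization of the modified Hall--Littlewood function yields $\langle\tilde H_{(1^n)}(\x;q),s_\omega(\x)\rangle=(-1)^{r(\omega)}q^{n(\omega)}\prod_{i=1}^n(q^i-1)/H_\omega(q)$, the $q$-hook-length formula.

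The hard part will be the bookkeeping in the general-type step: correctly tracking the Frobenius twists $\x\mapsto\x^{|\theta|}$, $q\mapsto q^{|\theta|}$ on each orbit-block, matching them against the plethystic definition of $u_\omega(\x;q)$ on types, and verifying that the combined signs coming from the $\F_q$-ranks of the various Levis combine with the explicit $(-1)^{r(h)}$ to produce precisely $(-1)^{r(\omega)}$.
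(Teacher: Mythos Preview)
Your proposal is correct and follows the same route the paper invokes: the paper does not give a self-contained proof but refers to \cite[Theorem 2.2.2]{HLRV1} for the case where all $d_i=1$ (remarking that the general case works the same way) and to \cite[Chapter IV, (6.7)]{macdonald} for (2). Your outline --- reduce to the unipotent case via Green/Kostka--Foulkes, then for general types use the character formula for $R_{L_h^F}^{G^F}(\calU_h)$ block by block and reassemble via the plethystic rule defining $u_\omega(\x;q)$ --- is precisely the argument of \cite{HLRV1}, and your identification of the delicate point (tracking the Frobenius twists $\x\mapsto\x^{d_i}$, $q\mapsto q^{d_i}$ and the signs) is accurate.

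One small caution: the phrase ``a sum over $G^F$-conjugates of $s$ lying in $L_h^F$, each contributing a product of unipotent character values'' is a shorthand that hides the Green-function terms in the character formula for Lusztig induction from a possibly non-split Levi; since $\calU_h$ itself is a uniform function (expressed through $R_T^L(1)$'s by (\ref{almost})), this is harmless, but when you write it out you will want to pass through the transitivity $R_{T_w}^{G}=R_{L_h}^{G}\circ R_{T_w}^{L_h}$ rather than appeal to a Mackey-type formula directly. Otherwise the plan is sound.
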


If $d_i=1$ for all $i$, the first assertion of the Theorem is  \cite[Theorem 2.2.2]{HLRV1}, otherwise the same proof works with slight modifications. The second assertion is standard \cite[Chapter IV, (6.7)]{macdonald}.

\subsection{The characters of $\GU_n(\F_q)=G^{F'}$}
\bigskip

{\bf Conjugacy classes}
\bigskip

Denote by $\Xi'$ the set of $F'$-orbits of $\overline{\F}_q^*=\GL_1(\overline{\F}_q)$ and for $\xi\in\Xi'$, denote by $|\xi| $ the cardinal of $\xi$.  The set of conjugacy classes of $G^{F'}$ is in bijection with the set 

$$
\calP_n(\Xi'):=\left\{f:\Xi'\rightarrow\calP\,\left|\, \sum_{\xi\in\Xi'}|\xi| \, |f(\xi)|=n\right\}\right..
$$
For $f\in\calP_n(\Xi)$, we let $C'_f$ be the corresponding conjugacy class of $G^{F'}$. As in \S \ref{typeG} we can associate to any $f\in\calP_n(\Xi')$ a type $\t(f)\in\T_n$.

For example, the types $(1,1)^2$, $(2,1)$, $(1,1^2)$ and $(1,2^1)$ are respectively the types of the following kind of matrices (up to conjugacy in $\GL_2(\overline{\F}_q)$)
\bigskip

$$
\left(\begin{array}{cc}a&0\\0&b\end{array}\right),\,\,\left(\begin{array}{cc}x&0\\0&x^{-q}\end{array}\right),\,\,\left(\begin{array}{cc}a&0\\0&a\end{array}\right),\,\, \left(\begin{array}{cc}a&1\\0&a\end{array}\right)
$$
where $a\neq b\in\mu_{q+1}$, $x\in\F_{q^2}\backslash\mu_{q+1}$.
\bigskip

For a type $\tau$ of size $n$, we define the polynomial

\begin{equation}
a'_\tau(t):=(-1)^na_\tau(-t)
\label{a'}\end{equation}
By Wall (see \cite[Proposition 3.2]{NV}), the evaluation $a'_\tau(q)$ is the cardinality of the centralizer of an element of $G^{F'}$ of type $\tau$.
\bigskip

{\bf Irreducible characters}
\bigskip

\noindent Let us now give the construction of the irreducible characters of $G^{F'}$.

For a positive integer, we consider the multiplicative group

$$
M_m:=\{x\in\overline{\F}_q^*\,|\, x^{q^m}=x^{(-1)^m}\}.
$$
We have $M_m=\F_{q^m}^*$ if $m$ is even and $M_m=\mu_{q^m+1}$ if $m$ is odd.

If $r\mid m$, then the polynomial $|M_r|$ divides $|M_m|$ and  we have a norm map

$$
M_m\rightarrow M_r,\hspace{.5cm} x\mapsto x^{|M_m|/|M_r|}.
$$
We may thus consider the direct limit

$$
\Gamma':=\varinjlim \widehat{M_m}
$$
of the character groups $\widehat{M_m}$. The Frobenius $F':x\mapsto x^{-q}$ on $\overline{\F}_q^*$ preserves the subgroups $M_m$ and so acts on $\Gamma'$. We denote by $\Theta'$ the set of $F'$-orbits of $\Gamma'$.

We denote by $\calP_m(\Theta')$ the set of all maps $f:\Theta'\rightarrow \calP$ such that

$$
|f|:=\sum_{\theta\in\Theta'} |\theta| \, |f(\theta)|=m.
$$

\bigskip

As in \S \ref{typeG}, we can associate to any $f\in\calP_m(\Theta')$ a type $\t(f)\in\T_m$.
\bigskip

The irreducible characters of $G^{F'}$ are naturally parametrized by the set $\calP_n(\Theta')$ (the trivial unipotent character corresponds to the partition $(n^1)$).
\bigskip

For $f\in\calP_n(\Theta')$, we construct the associated irreducible character $\calX'_f$ in terms of Deligne-Lusztig theory as follows. Define

$$
L_f^{F'}:=\prod_{\begin{tiny}\begin{array}{c}\theta\in\Theta', f(\theta)\neq 0\\|\theta| \text{ even}\end{array}\end{tiny}}\GL_{|f(\theta)|}\left(\F_{q^{|\theta| }}\right)\prod_{\begin{tiny}\begin{array}{c}\theta\in\Theta', f(\theta)\neq 0\\|\theta| \text{ odd}\end{array}\end{tiny}}{\GU}_{|f(\theta)|}\left(\F_{q^{|\theta| }}\right)
$$
This is the group of $\F_q$-points of some $F'$-stable Levi subgroup $L_f$ of $G$. For each $\theta\in\Theta'$ such that $f(\theta)\neq 0$, choose a representative $\dot{\theta}$ of $\theta$. The collection of the $\dot{\theta}$ composed with the determinant defines a linear character $\theta'_f$ of $L_f^{F'}$ and the partitions $f(\theta)$ defines an almost unipotent character $\calU''_f$ of $L_f^{F'}$ using Formula (\ref{almostbis}) for both $F$ and $F'$.
\bigskip

For example, assume that $n=2$. If $\t(f)=(1,1)^2$, then $f$ is supported on two orbits of $\Theta'$ of size one, say $\{\alpha\}$ and $\{\beta\}$ with $\alpha,\beta\in\widehat{\mu_{q+1}}$, $L_f^{F'}\simeq \mu_{q+1}\times\mu_{q+1}$ and $\theta_f(a,b)= \alpha(a)\beta(b)$. If $\omega_f=(2,1)$, then $f$ is supported on one orbit $\{\eta,\eta^{-q}\}\in\Theta'$ of size $2$ with $\eta\in\widehat{\F}_{q^2}^*$, $L_f^{F'}\simeq \GL_1(\F_{q^2})$, and $\theta'_f=\alpha$.

\begin{remark}From \cite{LS}, the virtual character $\calU''_f$ is up to a sign a true unipotent character  of $L_f^{F'}$ which we denote by $\calU'_f$. For a partition $\mu$ of $n$ we have

$$
\calU'_\mu=(-1)^{n(\mu^*)}\calU_\mu''.
$$
The values of $\calU''_f$ at unipotent elements are obtained from those of $\calU_f$ essentially by replacing $q$ by $-q$. 
\end{remark}

\begin{theorem}\cite[Lusztig-Srinivasan]{LS} We have

$$
\calX'_f=(-1)^{r'(f)+n(f^*)}R_{L_f^{F'}}^{G^{F'}}(\theta'_f\cdot\calU''_f)
$$
where $r'(f):=r'(\t(f))$ is given by Formula (\ref{r}), $f^*\in\calP_n(\Theta')$ is obtained from $f$ by requiring that $f^*(\theta)$ is the dual partition $f(\theta)^*$ for each $\theta$, and where for any $f$, we put $n(f)=n(\t(f))$.
\label{LS}\end{theorem}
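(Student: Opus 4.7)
The plan is to derive the formula by combining two ingredients: (i) Lusztig's Jordan decomposition of irreducible characters of $G^{F'}$, which expresses any $\calX'_f$ as a sign times a Lusztig-induced class function of the form $R_{L_f^{F'}}^{G^{F'}}(\theta'_f\cdot\calU'_f)$, where $\calU'_f$ is the true unipotent character of $L_f^{F'}$ encoded by the partitions $f(\theta)$; and (ii) the Lusztig--Srinivasan Ennola duality at the unipotent level, which compares $\calU'_f$ to the virtual character $\calU''_f$ defined by applying formula~(\ref{almost}) with the $F'$-twisted tori.

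First I would fix $f\in\calP_n(\Theta')$ and identify the semisimple datum it encodes: via the dual group, the orbits $\theta$ with $f(\theta)\neq 0$ together with the chosen representatives $\dot{\theta}$ determine a rational semisimple conjugacy class in $G^*$ whose centralizer is dual to $L_f$, with the $\GL$-factors coming from the even-size orbits and the unitary factors from the odd-size orbits --- exactly as in the theorem. The linear character $\theta'_f$ is then the one attached to this semisimple class by the standard bijection between rational semisimple classes in $G^*$ and linear characters on dual Levis.

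Next I would invoke the Deligne--Lusztig Jordan decomposition in the form
$$\calX'_f=\varepsilon_G\,\varepsilon_{L_f}\, R_{L_f^{F'}}^{G^{F'}}(\theta'_f\cdot\calU'_f),$$
with $\varepsilon_G=(-1)^{\lceil n/2\rceil}$ and $\varepsilon_{L_f}=(-1)^{\sum_\theta|f(\theta)|}$ the signs attached to the $\F_q$-ranks of $G$ and $L_f$. Their product is precisely $(-1)^{r'(f)}$ as defined in \S\ref{premF'}. At this stage the formula of the theorem is known up to the replacement $\calU'_f\leftrightarrow\calU''_f$. To complete the proof I would use Ennola duality for unipotent characters: the Lusztig--Srinivasan theorem asserts that the virtual character $\calU''_\mu$ of $\U_m(\F_{q^2})$ obtained from (\ref{almost}) with $F'$-twisted tori is, up to a sign, a genuine unipotent character, with $\calU''_\mu=(-1)^{n(\mu')}\calU'_\mu$. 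Taking the product of these signs over the blocks of $L_f^{F'}$ yields $\calU''_f=(-1)^{n(f')}\calU'_f$, which upon substitution produces the sign $(-1)^{r'(f)+n(f')}$ of the theorem.

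The main obstacle is verifying the exact exponent $n(f')$: Lusztig--Srinivasan phrase Ennola duality in terms of fake degrees and $a$-functions, so one must translate their sign carefully into the combinatorial invariant $n(\t(f)')$ used here. This translation is sensitive to the convention labelling the trivial unipotent character of $\GL_m(\F_q)$ by the one-row partition $(m^1)$ (versus its dual); tracking the sign consistently through the Lusztig induction $R_{L_f^{F'}}^{G^{F'}}$ and confirming that it factors correctly over the unitary and general linear blocks of $L_f^{F'}$ is the delicate point of the argument.
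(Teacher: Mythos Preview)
The paper does not give its own proof of this statement. Immediately after the theorem it says: ``In \cite{LS}, it is proved that $R_{L_f^{F'}}^{G^{F'}}(\theta'_f\cdot\calU''_f)$ is an irreducible true character up to a sign. The explicit computation of the sign in the above theorem is done in \cite[Theorem 4.3]{NV}.'' So there is no in-paper argument to compare your sketch against; the result is imported wholesale from \cite{LS} and \cite{NV}.

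Your two-step outline --- Jordan decomposition producing the rank sign, then Ennola for unipotent characters producing the $n(f')$ sign --- is exactly the shape of the argument in those references, and you correctly flag the sign bookkeeping as the delicate point. One concrete caution: you write $\varepsilon_{L_f}=(-1)^{\sum_\theta|f(\theta)|}$, implicitly identifying $\sum_\theta|f(\theta)|$ with the $\F_q$-rank of $L_f$ under $F'$. That identification is stated in the paper only for the split Frobenius $F$ (see \S\ref{premF}); in the $F'$ case the paper is careful to claim $r'(f)=\F_q\text{-rank}(\U_n)+\F_q\text{-rank}(L_f)$ only when $L_f$ is a maximal torus. For general $L_f$ the unitary factors $\U_{|f(\theta)|}(\F_{q^{2|\theta|}})$ do not contribute $|f(\theta)|$ to the $\F_q$-rank, so the clean factorization you propose is not literally the $\varepsilon_G\varepsilon_{L_f}$ decomposition. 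Making this come out right is precisely the content of the sign calculation in \cite{NV} that the paper invokes.
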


In \cite{LS}, it is proved that  $R_{L_f^{F'}}^{G^{F'}}(\theta'_f\cdot\calU''_f)$ is an irreducible true character up to a sign. The explicit computation of the sign in the above theorem is done in  \cite[Theorem 4.3]{NV}.
\bigskip

For an irreducible character $\calX'=\calX'_f$ of $G^{F'}$, define

$$
\tilde{\calX}'=(-1)^{r'(f)+n(f^*)}R_{L_f^{F'}}^{G^{F'}}(\calU''_f).
$$
We have the following theorem analogous to Theorem \ref{xtilde} with the Frobenius $F'$ instead of $F$.

\begin{theorem}[Ennola duality]Let $\calX'$ and $\calX$ be irreducible characters respectively of $G^{F'}$ and $G^F$ both of type $\omega$.

(1) For any conjugacy class $C'$ of $G^{F'}$ and $C$ of $G^F$ of type $\tau$, we have

\begin{align*}
\tilde{\calX}'(C')&=(-1)^{n(\omega^*)+{n\choose 2}}\,\tilde{\calX}(C)(-q)\\
&=(-1)^{r'(\omega)+n(\omega^*)}\left\langle \tilde{H}_\tau(\x;-q),s_\omega(\x)\right\rangle\\
\end{align*}

(2) In particular

$$
\calX'(1)=(-1)^{n(\omega^*)+{n\choose 2}}\,\calX(1)(-q)
$$
\label{charval'}\end{theorem}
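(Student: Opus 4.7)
The plan is to reduce Theorem \ref{charval'} to the Lusztig--Srinivasan proof of Ennola's conjecture \cite{LS} by carefully tracking signs. First note that the two equalities in (1) are equivalent. By Theorem \ref{xtilde},
\[
\tilde{\calX}(C_\tau) = (-1)^{r(\omega)} \langle \tilde{H}_\tau(\x;q), s_\omega(\x)\rangle
\]
is a polynomial identity in $q$; substituting $q \mapsto -q$ gives $\tilde{\calX}(C_\tau)(-q) = (-1)^{r(\omega)}\langle \tilde{H}_\tau(\x;-q), s_\omega(\x)\rangle$. The first equality of (1) therefore implies the second provided $r(\omega) + \binom{n}{2} \equiv r'(\omega) \pmod 2$, which, since $r(\omega)-r'(\omega) = n-\lceil n/2\rceil = \lfloor n/2\rfloor$, reduces to the elementary parity identity $\lfloor n/2\rfloor \equiv \binom{n}{2}\pmod 2$ (immediate from the four residues of $n \bmod 4$).

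It then suffices to establish $\tilde{\calX}'(C'_\tau) = (-1)^{n(\omega')+\binom{n}{2}}\,\tilde{\calX}(C_\tau)(-q)$. Unwinding the definitions of $\tilde{\calX}$ and $\tilde{\calX}'$, using $n(f')=n(\omega')$ and the parity identity above, this is equivalent to the sign-free statement
\[
R_{L_f^{F'}}^{G^{F'}}(\calU''_f)(C'_\tau) = R_{L_f^F}^{G^F}(\calU_f)(C_\tau)(-q).
\]
Two ingredients deliver this. The first is the remark preceding Theorem \ref{LS}: $\calU''_f$ takes on $L_f^{F'}$ the values obtained from those of $\calU_f$ on $L_f^F$ by the substitution $q \mapsto -q$, under the natural Ennola correspondence between conjugacy classes of the two groups. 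The second is that Deligne-Lusztig induction commutes with the Ennola operation: expressing $R_L^G(\pi)(g)$ via the $\ell$-adic cohomology of the associated Deligne-Lusztig variety and comparing Frobenius eigenvalues for $F$ and $F'$ shows that, once the data are matched by the Ennola correspondence on both the Levi and the ambient group, $R_{L^{F'}}^{G^{F'}}(\pi')(g')$ equals $R_{L^F}^{G^F}(\pi)(g)|_{q\to -q}$ with no extra sign.

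The main obstacle is this last point. The reason the pseudo-unipotent $\calU''_f$ is singled out in Theorem \ref{LS} (rather than the genuine unipotent $\calU'_f$) is precisely that the $(-1)^{n(\omega')}$ discrepancy between the two is absorbed into the definition of $\calU''_f$, giving a clean sign-free duality at the level of $R$; verifying that all remaining sign contributions cancel requires careful bookkeeping along the lines of \cite{LS} and \cite{NV}. Once part (1) is proved, part (2) follows by specialization to $C=C'=\{1\}$: at the identity the linear characters $\theta_f$ and $\theta'_f$ take value $1$, hence $\tilde{\calX}(1)=\calX(1)$ and $\tilde{\calX}'(1)=\calX'(1)$, and the formula of (1) specializes to that of (2).
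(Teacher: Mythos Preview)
Your reduction is sound: the parity identity $\lfloor n/2\rfloor\equiv\binom{n}{2}\pmod 2$ is correct, and unwinding the definitions of $\tilde{\calX}$ and $\tilde{\calX}'$ does collapse the claim to the sign-free statement $R_{L_f^{F'}}^{G^{F'}}(\calU''_f)(C'_\tau)=R_{L_f^F}^{G^F}(\calU_f)(C_\tau)\big|_{q\to -q}$. Where you and the paper diverge is in how that last step is handled.

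You try to prove the sign-free compatibility of Lusztig induction with the Ennola operation directly, and you correctly flag this as the ``main obstacle''; but what you offer there is an assertion (comparing Frobenius eigenvalues on Deligne--Lusztig varieties) rather than an argument, deferring the actual verification to ``careful bookkeeping along the lines of \cite{LS} and \cite{NV}''. That is precisely the work the paper sidesteps. The paper's own argument (the Remark following the theorem) runs in the opposite direction: from \cite{LS} one knows only that $\calX'(1)$ and $\calX(1)(-q)$ agree up to a global sign; since $\calX'(1)>0$ is a character degree, plugging $-q$ into the explicit hook formula of Theorem~\ref{xtilde}(2) and reading off the sign of the result determines that sign as $(-1)^{n(\omega')+\binom{n}{2}}$. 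Because Ennola duality \cite{LS} asserts that the \emph{same} sign governs the full character table, part~(1) then follows from part~(2), and as a byproduct one recovers the sign in Theorem~\ref{LS}.

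So your route (1)$\Rightarrow$(2) is the reverse of the paper's (2)$\Rightarrow$(1). Yours gives a more structural picture but leaves the hardest step as a black box; the paper's positivity trick is shorter and self-contained, extracting the sign from a single degree computation rather than from a general compatibility statement about $R_L^G$.
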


\begin{proof}Using the character formula for Deligne-Lusztig induction \cite[\S 10.1]{DM}, the proof of the theorem reduces to Ennola duality for unipotent characters.

\end{proof}

\begin{remark}Note that as we know from Ennola duality that
  $\calX'(1)$ and $\calX(1)(-q)$ differ by a sign and that $\calX'(1)$
  is positive we can easily compute the sign  in (1) and in Theorem
  \ref{LS} from Theorem~\ref{xtilde}(2).

\end{remark}

\subsection{Ennola duality for generic multiplicities}

In \cite[Definition 4.2.2]{HLRV} we define the notion of generic $k$-tuple of irreducible characters of $G^F$. We define generic $k$-tuple of irreducible characters of $G^{F'}$ excatly in the same way. We do not give the definition as  we will only use the theorem below. However to give a taste of what it is we give the definition for irreductible characters whose type is a partition of $n$ (i.e. unipotent characters tensorized by a linear character of $G^F$).
\bigskip

If $\calU_1,\dots,\calU_k$ are unipotent characters of $G^F$ and if $\alpha\in\widehat{\GL_1(\F_q)}$ is of order $n$, then $$(\calU_1,\dots,\calU_{k-1},(\alpha\circ\det)\cdot\calU_k)$$ is a generic $k$-tuple of irreducible characters of $G^F$. 

\begin{remark}For any multi-type $\omhat=(\omega_1,\dots,\omega_k)\in(\T_n)^k$, there always exist generic $k$-tuples of irreducible characters of $G^F$ (or $G^{F'}$) of type $\omhat$ as long as the characteristic is large enough. The existence is equivalent to that of the existence of generic $k$-tuple of conjugacy classes (see \cite[Proposition 8.1.2]{LetS}) and the condition on the characteristic for the existence of generic $k$-tuple of conjugacy classes is explained in \cite[see above Proposition 3.4]{Lchar}. 
\label{small}\end{remark}
\bigskip

We have the following technical result:
\bigskip

\begin{theorem}(1) Let $(\calX_1,\dots,\calX_k)$ be a generic $k$-tuple of irreducible characters of $G^F$ of type $\omhat=(\omega_1,\dots,\omega_k)$. Let $\tau\in\T_n$ and denote by $C_\tau$ a conjugacy class of $G^F$ of type $\tau$. Then

\begin{align*}
\sum_{f\in\calP_n(\Xi), \t(f)=\tau}\prod_{i=1}^k\calX_i(C_f)&=(q-1) c^o_\tau\prod_{i=1}^k \tilde{\calX}_i(C_\tau)\\
&=(q-1)c_\tau^o(-1)^{r(\omhat)}\prod_{i=1}^k \left\langle \tilde{H}_\tau(\x_i;q),s_{\omega_i}\right\rangle.
\end{align*}
where $r(\omhat)=\sum_ir(\omega_i)$.

(2) Let $(\calX'_1,\dots,\calX'_k)$ be a generic $k$-tuple
  of irreducible characters of $G^{F'}$ of type
  $\omhat=(\omega_1,\dots,\omega_k)$. Let $\tau\in\T_n$ and denote by
  $C'_\tau$ a conjugacy class of $G^{F'}$ of type $\tau$. Then
\begin{align*}
\sum_{f\in\calP_n(\Xi'), \t(f)=\tau}\prod_{i=1}^k\calX'_i(C'_f)&=(q+1) c^o_\tau\prod_{i=1}^k \tilde{\calX}'_i(C'_\tau)\\
&=(q+1)c_\tau^o(-1)^{r'(\omhat)+n(\omhat^*)}\prod_{i=1}^k \left\langle \tilde{H}_\tau(\x_i;-q),s_{\omega_i}\right\rangle.
\end{align*}
where $r'(\omhat):=\sum_{i=1}^kr'(\omega_i)$ and $n(\omhat^*):=\sum_{i=1}^kn(\omega_i^*)$.
\label{sum-theo}\end{theorem}

\begin{proof}The assertion (1) follows from \cite[Lemma 2.3.5, Theorem 4.3.1]{HLRV} and the proof of the assertion (2) is completely similar.
\end{proof}

\begin{theorem}(1) Let $(\calX_1,\dots,\calX_k)$ be a generic $k$-tuple of irreducible characters of $G^F$ of type $\omhat\in(\T_n)^k$.  We have

$$
V_\omhat(q):=\left\langle\calX_1\otimes\cdots\otimes\calX_k,1\right\rangle_{G^F}=(-1)^{r(\omhat)}\H_\omhat(q).
$$
(2) Let $(\calX_1',\dots,\calX_k')$ be a generic $k$-tuple of irreducible characters of $G^{F'}$ of type $\omhat\in(\T_n)^k$.  We have

$$
V'_\omhat(q):=\left\langle\calX_1'\otimes\cdots\otimes\calX_k',1\right\rangle_{G^{F'}}=(-1)^{r'(\omhat)+n(\omhat^*)+n+1}\H_\omhat(-q).
$$
\label{genmulti}\end{theorem}
\bigskip

\begin{remark}According to Remark \ref{small}, generic $k$-tuples of irreducible characters of type $\omhat$ may not exist in small characteristics, however the  polynomials on the right hand side  of the above equalities  always exist and will be denoted by $V_\omhat(q)$ and $V'_\omhat(q)$ in small characteristics.

\end{remark}

The theorem says in particular that the generic multiplicities depend only on the types and not on the choice of the irreducible characters of a given type. Note that $\H_\omhat(q)$ is clearly a rational function in $q$ with rational coefficients. By the above theorem, it is also an integer for infinitely many values of $q$. Hence $\H_\omhat(q)$ is a polynomial in $q$ with rational coefficients. We will see that it has integer coefficients.

\bigskip

\begin{corollary}(Ennola duality for generic multiplicities)

$$
V'_\omhat(q)=(-1)^{r'(\omhat)+r(\omhat)+n(\omhat^*)+n+1}V_\omhat(-q).
$$
\label{Ennola-gen}\end{corollary}

In particular if $\omhat$ is a multi-partition $\muhat=(\mu^1,\dots,\mu^k)$, i.e. each coordinate $\omega_i$ is of the form $(1,\mu^i)$, then

$$
V'_\muhat(q)=(-1)^{k(n+\lceil n/2\rceil)+n(\muhat^*)+n+1}V_\muhat(-q).
$$

\begin{proof}[Proof of Theorem \ref{genmulti}]Assertion (1) was stated  without proof in \cite{L1}. We prove the assertion (2) for the convenience of the reader.

 We have

$$
\left\langle\calX'_1\otimes\cdots\otimes\calX'_k,1\right\rangle_{G^{F'}}=\sum_{C'}\frac{|C'|}{|G^{F'}|}\prod_{i=1}^k\calX'_i(C')
$$
where the sum is over the set over conjugacy classes. The quantity $|C'|/|G^{F'}|$ depends only on the type of $C'$, more precisely, see Formula (\ref{a'})
 $$
\frac{|C'_f|}{|G^{F'}|}=a'_{\t(f)}(q)^{-1}.
$$
We thus have

$$
\left\langle\calX'_1\otimes\cdots\otimes\calX'_k,1\right\rangle_{G^{F'}}=\sum_{\tau\in\T_n}\frac{1}{a'_\tau(q)}\sum_{f\in\calP_n(\Xi'), \t(f)=\tau}\prod_{i=1}^k\calX'_i(C'_f).
$$
Using Theorem \ref{sum-theo}(2) we get that

\begin{align*}
\left\langle\calX'_1\otimes\cdots\otimes\calX'_k,1\right\rangle_{G^{F'}}&=(-1)^{r'(\omhat)+n(\omhat^*)+n}(q+1)\left\langle\sum_{\tau\in\T_n}c_\tau^o\frac{1}{a_\tau(-q)}\prod_{i=1}^k\tilde{H}_\tau(\x_i;-q),s_\omhat\right\rangle\\
&=(-1)^{r'(\omhat)+n(\omhat^*)+n+1}\H_\omhat(-q).
\end{align*}

\end{proof}

\begin{remark}Notice that the map $q\mapsto -q$ is not preserved under $\Log$ (see Remark \ref{-}) and so we do not get $\H_\omhat(-q)$ as $(-q-1)\langle\Log(\Omega(-q)),s_\omhat\rangle$. 

\end{remark}

\section{Ennola duality for tensor products of unipotent characters}

\subsection{Infinite product formulas}\label{unipotent-GL}
\bigskip

For a multi-partition $\muhat=(\mu_1,\dots,\mu_k)$ of $n$, we consider the polynomials in $q$

$$
U_\muhat(q):=\left\langle \calU_{\mu^1}\otimes\cdots\otimes\calU_{\mu^k},1\right\rangle_{G^F},\hspace{1cm}U'_\muhat(q):=\left\langle \calU'_{\mu^1}\otimes\cdots\otimes\calU'_{\mu^k},1\right\rangle_{G^{F'}}.
$$
Let $\Phi_d(q)$, resp. $\Phi'_d(q)$, be the number of $F$-orbits, resp. $F'$-orbits, of $\overline{\F}_q^*$ of size $d\geq 1$.
\begin{proposition}
(1) We have \begin{equation}
\label{inf-prod-GL}
1+\sum_{n>0}\sum_{\muhat\in(\calP_n)^k}U_\muhat(q)s_\muhat
T^n=\prod_{d\geq 1}\Omega(\x_1^d,\dots,\x_k^d,q^d; T^d)^{\Phi_d(q)} 
\end{equation}
where $\Omega(\x_1,\dots,\x_k,q;T)$ is given by Formula (\ref{Cauchyfunction}).

\noindent (2) We have \begin{equation}
\label{inf-prod-U}
  1+\sum_{n>0}\sum_{\muhat\in(\calP_n)^k}
  (-1)^{\frac{1}{2}d_\muhat+1+n}U'_\muhat(q)s_\muhat\, T^n=\prod_{d\geq1}\Omega(\x_1^d,\dots,\x_k^d,(-q)^d;T^d)^{\Phi'_d(q)}
\end{equation}
where $$d_\muhat:=n^2(k-2)-\sum_{i,j}(\mu^i_j)^2+2.$$
\label{infinite}\end{proposition}

\begin{proof} Formula (\ref{inf-prod-GL})  is proved in \cite[Proof of Proposition 25]{L2}. Let us prove the second formula.

By
  Theorem \ref{charval'}(1), for a partition $\mu$ of size $n$ and
  conjugacy class $C'$ of $G^{F'}$ we have
$$
\calU'_\mu(C')=(-1)^{n+\lceil n/2\rceil+n(\mu^*)}\left\langle\tilde{H}_{\t(C')}(\x;-q),s_\mu(\x)\right\rangle
$$
Therefore by Equation ( \ref{a'}) we have
$$
1+\sum_{n>0}\sum_{\muhat\in(\calP_n)^k}(-1)^{\frac{1}{2}d_\muhat+1+n}U'_\muhat(q)s_\muhat\, T^n=\sum_{f\in\calP(\Xi')}\frac{1}{a_{\t(f)}(-q)}\prod_{i=1}^k\tilde{H}_{\t(f)}(\x_i;-q)T^{|\t(f)|}\\
$$
as 
$$
\frac{1}{2}d_\muhat+1\equiv k(n+\lceil n/2\rceil)+n(\muhat^*)\bmod 2.
$$

If $\omega=\{(d_i,\omega^i)^{m_i}\}$ is a type then
$$
a_\omega(q)=\prod_i a_{\omega^i}(q^{d_i})^{m_i}
$$
but $b_\omega(q):=a_\omega(-q)$ does not satisfy such an identity. Indeed $b_{\omega^i}(q^{d_i})=a_{\omega^i}(-q^{d_i})$ for both odd and even $d_i$ while
$$
b_\omega(q)=\prod_{i,\, d_i \text{ even}} a_{\omega^i}(q^{d_i})^{m_i}\prod_{i,\, d_i \text{ odd}}a_{\omega^i}(-q^{d_i})^{m_i}.
$$
Therefore we consider the partition
$$
\Xi'=\Xi'_e\coprod \Xi'_o
$$
into orbits of even and odd size respectively. Then
$$
\calP(\Xi')=\calP(\Xi'_e)\times\calP(\Xi'_o)
$$
and
\begin{align*}
1+\sum_{n>0}\sum_{\muhat\in(\calP_n)^k}&(-1)^{\frac{1}{2}d_\muhat+1+n}U'_\muhat(q)s_\muhat\, T^n\\
&=\left(\sum_{f\in\calP(\Xi'_e)}\frac{1}{a_{\t(f)}(q)}\prod_{i=1}^k\tilde{H}_{\t(f)}(\x_i;q)T^{|\t(f)|}\right)\left(\sum_{f\in\calP(\Xi'_o)}\frac{1}{a_{\t(f)}(-q)}\prod_{i=1}^k\tilde{H}_{\t(f)}(\x_i;-q)T^{|\t(f)|}\right)\\
&=\prod_{\xi\in\Xi'_e}\Omega\left(\x_1^{|\xi|},\dots,\x_k^{|\xi|},q^{|\xi|};T^{|\xi|}\right)\prod_{\xi\in\Xi'_o}\Omega\left(\x_1^{|\xi|},\dots,\x_k^{|\xi|},-q^{|\xi|};T^{|\xi|}\right)
\end{align*}
hence the result.
\end{proof}

\subsection{Ennola duality for tensor products of unipotent characters}

By M\"obius inversion formula we have

$$
\Phi_d(q)=\frac{1}{d}\sum_{r\mid d}\mu(r)(q^{d/r}-1),\hspace{1cm}\Phi'_d(q)=\frac{1}{d}\sum_{r\mid d}\mu(r)\left(q^{d/r}-(-1)^{d/r}\right).
$$
We introduce a new variable $u$ and we define a commun $u$-deformation of $\Phi_d(q)$ and $\Phi'_d(q)$ as

$$
\Phi_d(u,q):=\frac{1}{d}\sum_{r\mid d}\mu(r) u^{d/r}(q^{d/r}-1).
$$
Indeed

$$
\Phi_d(1,q)=\Phi_d(q),\hspace{1cm}\Phi_d(-1,-q)=\Phi'_d(q).
$$
For a multi-partition $\muhat$, define polynomials $\Tau_\muhat(u,q)$ by the formula 

\begin{equation}
\prod_{d\geq 1}\Omega(\x_1^d,\dots,\x_k^d,q^d;T^d)^{\Phi_d(u,q)}=1+u\sum_{n>0}\sum_{\muhat\in(\calP_n)^k}\Tau_\muhat(u,q) s_\muhat T^n.
\label{tau}\end{equation}
From Proposition \ref{infinite} we have the following.
\begin{theorem} [Ennola duality] We have
$$
U_\muhat(q)=\Tau_\muhat(1,q),\qquad \qquad
U_\muhat'(q)=(-1)^{\frac{1}{2}d_\muhat+n}\Tau_\muhat(-1,-q).
$$
\label{EnnolaII}\end{theorem}
We will also prove in  \S \ref{PK} the following result.

\begin{theorem} (i)   We have
$$
V_\muhat(q)=\Tau_\muhat(0,q),\qquad\qquad
V'_\muhat(q)=(-1)^{\frac{1}{2}d_\muhat+n}\,\Tau_\muhat(0,-q). 
$$
(ii) For a multi-partition $\muhat=(\mu^1,\dots,\mu^k)$, the
coefficient of the term of $\Tau_\muhat(u,q)$ of degree $n-1$ in $u$
is independent of $q$ and equals the Kronecker coefficient  
$$
\langle\chi^{\mu^1}\otimes\cdots\otimes\chi^{\mu^k},1\rangle_{S_n}.
$$
\label{K-coeff}\end{theorem}

\section{Module theoretic interpretation of the generic multiplicities}

\subsection{Quiver varieties}

\label{quiver}

Let $\K$ be an algebraically closed field ($\mathbb{C}$ or $\overline{\F}_q$). Fix a \emph{generic} $k$-tuple $(\calC_1,\dots,\calC_k)$ of semisimple regular adjoint orbits of $\gl_n(\K)$, i.e. the adjoint orbits $\calC_1,\dots,\calC_k$ are semisimple regular, 

$$
\sum_{i=1}^k\Tr(\calC_i)=0,
$$
and for any subspace $V$ of $\K^n$ stable by some $X_i\in\calC_i$ for each $i$ we have

$$
\sum_{i=1}^k\Tr(X_i|_V)\neq 0
$$
unless $V=0$ or $V=\K^n$ (see \cite[Lemma 2.2.2]{HLRV}). In other
words, the sum of the eigenvalues of the orbits
$\calC_1,\dots,\calC_k$ equals $0$ and if we select $r$ eigenvalues of
$\calC_i$ for each $i$ with $1\leq r< n$, then the sum of the selected
eigenvalues does not vanish. Such a $k$-tuple
$(\calC_1,\dots,\calC_k)$ always exists. 

Consider the affine algebraic variety
$$
\calV_n:=\left\{(X_1,\dots,X_k)\in\calC_1\times\cdots\times\calC_k\,\left|\, \sum_iX_i=0\right\}\right..
$$
The diagonal action of  $\GL_n(\K)$ on $\calV_n$ by conjugation induces a free action of $\PGL_n(\K)$ (in particular all $\GL_n$-orbits of $\calV$ are closed), see \cite[\S 2.2]{HLRV},  and we consider the GIT quotient

$$
\calQ=\calQ_n:=\calV_n/\!/\PGL_n(\K)={\rm Spec}\left(\K[\calV_n]^{\PGL_n(\K)}\right).
$$
This is a non-singular irreducible affine algebraic variety (see \cite[Theorem 2.2.4]{HLRV}) of dimension

\begin{equation}
{\rm dim}\, \calQ=n^2(k-2)-kn+2.
\label{dim}\end{equation}
Crawley-Boevey \cite{CB} makes a connection between the points of $\calQ$ and
representations of the star-shaped quiver with $k$-legs of length $n$
 from which the variety  $\calQ$ can be realized as a
quiver variety (see \cite{HLRV} and references therein for details). 

\bigskip

Denote by $H_c^*(\calQ)$ the compactly supported cohomology of $\calQ$ (if $\K=\C$, this is the usual cohomology with coefficients in $\C$ and if the characteristic of $\K$ is positive this is the $\ell$-adic cohomology with coefficients in $\overline{\Q}_\ell$). The variety $\calQ$ is cohomologically pure and has vanishing odd cohomology (see \cite[Section 2.4]{CBvdB} and \cite[Theorem 2.2.6]{HLRV}). 

\subsection{Action of $\bS'_n$ on cohomology}\label{Weyl-action}

Let $\K$ be either $\C$ or $\overline{\F}_q$. Consider the involutions $\GL_n(\K)\rightarrow \GL_n(\K)$, $g\mapsto {^t}g^{-1}$ and $\gl_n(\K)\rightarrow\gl_n(\K)$, $x\mapsto -{^t}x$ which we both denote by $\iota$. 
Notice that 
$$
\iota(gxg^{-1})=\iota(g)\iota(x)\iota(g)^{-1}
$$
for any $g\in\GL_n(\K)$ and $x\in\gl_n(\K)$. 

Notice also that $\iota$ fixes permutation matrices of $\GL_n(\K)$
which are identified with $S_n$. Consider the finite group
$$
\bS_n':=\bS_n\times\langle \iota\rangle.
$$
where $\bS_n:=(S_n)^k$.
\bigskip

In this section we construct an  action of $\bS'_n$ on the cohomology $H_c^*(\calQ)$ (notice that $\bS_n$ and $\langle\iota\rangle$ do not act on $\calQ$).
\bigskip

The construction of the $\bS_n$-action is done in  \cite{HLRV2} (this is a particular case of the action of Weyl groups on the cohomology of quiver varieties as studied by many authors including Nakajima \cite{Nakajima}\cite{Nakajima1}, Lusztig \cite{Lusztig} and Maffei \cite{Maffei}). The $\bS_n$-module structure does not depend on the choice of the eigenvalues of the orbits $\calC_1,\dots,\calC_k$ (as long as this choice is generic). 
\bigskip

Let $\t_n\subset\gl_n$ be the closed subvariety of diagonal matrices and let $\bm{\t}^{\rm gen}_n$ be the open subset of $\t_n^k$ of generic regular $k$-tuples $(\sigma_1,\dots,\sigma_k)$, i.e. for each $i=1,\dots,k$, the diagonal matrix $t_i$ has distinct eigenvalues and if $\calO_i$ denotes the $\GL_n$-orbit of $t_i$, then the $k$-tuple $(\calO_1,\dots,\calO_k)$ is generic.
\bigskip

Let $T_n\subset\GL_n$ be the closed subvariety of diagonal matrices and put

$$
\bfG_n=(\GL_n)^k,\hspace{1cm}\bfT_n=(T_n)^k,\hspace{1cm}\bm{\g}_n=(\gl_n)^k.
$$
Consider the GIT quotient

$$
\tilde{\calQ}_n:=\left.\left.\left\{\left(X,g\bfT_n,\sigma\right)\in\bm{\g}_n\times(\bfG_n/\bfT_n)\times\bm{\t}^{\rm gen}_n\,\left|\, g^{-1}Xg=\sigma, \sum_i X_i=0\right\}\right.\right/\!\right/\bfG_n
$$
where $\bfG_n$ acts by conjugation on $\bm{\g}_n$ and by left multiplication on $\bfG_n/\bfT_n$. 
\bigskip

The group $\bS_n$ acts on $\bfG_n/\bfT_n$ as $s\cdot g\bfT_n:=gs^{-1}\bfT_n$ where we regard elements of $S_n$ as permutation matrices in $\GL_n$. It acts also on $\bm{\t}^{\rm gen}_n$ by conjugation from which we get an action of $\bS_n$ on $\tilde{\calQ}_n$. 

The projection

$$
p:\tilde{\calQ}_n\rightarrow\bm{\t}^{\rm gen}_n
$$
is then $\bS_n$-equivariant for these actions. It is also $\langle\iota\rangle$-equivariant for its action on $\tilde{\calQ}$ given by

$$
\iota(X,g\bfT,\sigma):=(\iota(X),\iota(g)\bfT,\iota(\sigma)).
$$
As $\iota$ acts trivially on $\bS_n$ we get an action of $\bS'_n$ on $\tilde{\calQ}_n$ and  $p$ is $\bS_n'$-equivariant.
\bigskip

\begin{lemma}If the $\bfG_n$-conjugacy class of $\sigma\in\bm{\t}^{\rm gen}_n$ in $\bm{\g}_n$ is $\calC_1\times\cdots\times\calC_k$, the projection 
$$
\calQ_\sigma:=p^{-1}(\sigma)\rightarrow\calQ,\hspace{1cm}(X,g\bfT,\sigma)\mapsto X
$$
is an isomorphism.
\label{Q}\end{lemma}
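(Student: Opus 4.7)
My approach is to construct mutually inverse morphisms $\calQ_\sigma \to \calQ$ and $\calQ \to \calQ_\sigma$ by passing through the parametrization of $\calV_n$ by cosets in $\bfG_n/\bfT_n$ determined by $\sigma$.

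First I would unpack the fiber $p^{-1}(\sigma)$. A representative $(X, g\bfT_n, \sigma)$ satisfies $g_i^{-1} X_i g_i = \sigma_i$ for each $i$ together with $\sum_i X_i = 0$. The first equation forces $X_i = g_i \sigma_i g_i^{-1}$, and since by hypothesis the $\bfG_n$-orbit of $\sigma$ in $\bm{\g}_n$ is $\calC_1 \times \cdots \times \calC_k$, each $X_i$ lies in $\calC_i$, so $X \in \calV_n$. Hence the assignment $(X, g\bfT_n, \sigma) \mapsto X$ takes values in $\calV_n$, and post-composing with the quotient $\calV_n \to \calQ$ produces the candidate morphism.

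Second I would exhibit an inverse at the level of representatives. Given $X \in \calV_n$, each $X_i \in \calC_i$ is $\GL_n$-conjugate to $\sigma_i$ by some $g_i$. Because $\sigma_i$ is regular semisimple, its centralizer in $\GL_n$ equals $T_n$, so the coset $g_iT_n$ is uniquely determined by $X_i$ (and $\sigma_i$). Assembling yields a canonical element $g\bfT_n \in \bfG_n/\bfT_n$ and hence a morphism $\calV_n \to E_\sigma := \{(X, g\bfT_n, \sigma) : g^{-1}Xg = \sigma,\ \sum_i X_i = 0\}$ sending $X$ to $(X, (g_iT_n)_i, \sigma)$. Mutual inverseness with the forgetful map is then immediate: the coset $g\bfT_n$ produced in this construction agrees with the one already present in any given triple $(X, g\bfT_n, \sigma)$, by the uniqueness above.

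Third, I would verify both maps are equivariant under the diagonal $\PGL_n$-action (acting simultaneously on $X$ by conjugation and on $g$ by left multiplication). This diagonal copy is precisely the subgroup of $\bfG_n$ which preserves the constraint $\sum_i X_i = 0$ on the fiber. Passing to quotients then gives $\calV_n/\!/\PGL_n \xrightarrow{\sim} E_\sigma/\!/\PGL_n$, identifying the LHS with $\calQ$ by definition and the RHS with $\calQ_\sigma = p^{-1}(\sigma)$.

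The main obstacle will be to reconcile the $\bfG_n$-quotient defining $\tilde{\calQ}_n$ with the $\PGL_n$-quotient defining $\calQ$: one must check that the non-diagonal directions in $\bfG_n$ are either absorbed into the $\bfT_n$-ambiguity of $g$ or fail to preserve $\sum_i X_i = 0$, so that the effective action on $E_\sigma$ reduces to the diagonal $\PGL_n$. Freeness of the diagonal $\PGL_n$-action on $\calV_n$, already established in \S\ref{quiver}, ensures the GIT quotient coincides with the geometric one, and the claimed isomorphism $\calQ_\sigma \cong \calQ$ follows.
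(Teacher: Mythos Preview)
The paper states this lemma without proof. Your argument is correct and supplies exactly what is needed: the orbit map $g_iT_n \mapsto g_i\sigma_i g_i^{-1}$ from $\GL_n/T_n$ to $\calC_i$ is an isomorphism because $\sigma_i$ is regular semisimple with stabilizer $T_n$, so $X \mapsto (X,g\bfT_n,\sigma)$ is a well-defined morphism inverse to the forgetful projection, and both maps are equivariant for the diagonal $\PGL_n$. The obstacle you flag is genuine and your diagnosis is the right one: only the diagonal $\GL_n\subset\bfG_n$ preserves the locus $\sum_i X_i=0$, so the quotient appearing in the definition of $\tilde{\calQ}_n$ is effectively by this diagonal copy (the remaining central directions act trivially on $\bfG_n/\bfT_n$); with that reading, passing to the free $\PGL_n$-quotient on both sides yields the claimed isomorphism.
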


For $\sigma\in\bm{\t}^{\rm gen}_n$ and $w'\in\bS'_n$, denote by $w':\calQ_\sigma\rightarrow\calQ_{w'\cdot\sigma}$ the isomorphism given by $(X,g\bfT_n,\sigma)\mapsto w'\cdot(X,g\bfT_n,\sigma)$.

\begin{theorem}\cite[Theorem 2.3]{HLRV2}Assume that $\K=\overline{\F}_q$ with ${\rm char}(\K)>>0$ or $\K=\C$ and let $\kappa$ be  $\overline{\Q}_\ell$ if $\K=\overline{\F}_q$ (with $\ell \nmid q$) and let $\kappa$ be $\C$ if $\K=\C$. The sheaf $R^ip_!\kappa$ is constant.\label{2.3}\end{theorem}

Therefore, for any $\sigma,\tau\in\bm{\t}^{\rm gen}_n$, there exists a canonical isomorphism $i_{\sigma,\tau}:H_c^i(\calQ_\sigma)\rightarrow H_c^i(\calQ_\tau)$ such that
$$
i_{\sigma,\tau}\circ i_{\zeta,\sigma}=i_{\zeta,\tau}
$$
for all $\sigma,\tau,\zeta\in\bm{\t}^{\rm gen}_n$.

Since $p$ is $\bS'_n$-equivariant, the isomorphisms $i_{\sigma,\tau}$ are compatible with the action of $\bS'_n$.
\bigskip

We define a representation
$$
\rho^j:\bS'_n\rightarrow \GL\left(H_c^{2j}(\calQ_\sigma)\right)
$$
by $\rho^j(w')=i_{w'\cdot \sigma,\sigma}\circ(w'{^{-1}})^*$.
Thanks to Lemma \ref{Q}, we get an action of $\bS'_n$ on $H_c^i(\calQ)$.

\subsection{Multiplicities and quiver varieties}\label{MQ}

{\bf Preliminaries}
\bigskip

\noindent For a partition $\mu$ of $n$ we denote by $M_\mu$ an irreducible $\overline{\Q}_\ell[S_n]$-module corresponding to $\mu$. For a type $\omega=\{(d_i,\omega^i)^{m_i}\}_{i=1,\dots,r}\in\T_n$, we consider the subgroup

$$
S_\omega=\prod_i\underbrace{(S_{|\omega^i|})^{d_i}\times\cdots\times(S_{|\omega^i|})^{d_i}}_{m_i}
$$
of $S_n$ and the $S_\omega$-module

$$
M_\omega:=\bigotimes_{i=1}^r(\underbrace{T^{d_i}M_{\omega^i}\otimes\cdots\otimes T^{d_i}M_{\omega^i}}_{m_i})
$$
where $T^dV$ stands for $V\otimes\cdots \otimes V$ ($d$ times).

The permutation action of $S_{d_i}$ on the factors of $(S_{|\omega^i|})^{d_i}$ and $T^{d_i}M_{\omega^i}$ induces an action of $\prod_i(S_{d_i})^{m_i}$ on both $S_\omega$ and $M_\omega$ and so we get an action of $S_\omega\rtimes\prod_i(S_{d_i})^{m_i}$ on $M_\omega$.

We may regard $S_\omega\rtimes \prod_i(S_{d_i})^{m_i}$ as a subgroup of the normalizer $N_{S_n}(S_\omega)$. Any $S_n$-module becomes thus an $S_\omega\rtimes\prod_i(S_{d_i})^{m_i}$-module by restriction. 

Now let $N$ be any $S_n$-module, we get an action of $\prod_i(S_{d_i})^{m_i}$ on 
$$
{\rm Hom}_{S_\omega}\left(M_\omega, N\right)
$$
(where $N$ is considered as an $S_\omega\rtimes\prod_i(S_{d_i})^{m_i}$-module by restriction) as 
$$
(r\cdot f)(v)=r\cdot(f(r^{-1}\cdot v))
$$
for any $f\in{\rm Hom}_{S_\omega}(M_\omega,N)$ and $r\in\prod_i(S_{d_i})^{m_i}$.

 Let $v_\omega$ be the element of $\prod_i(S_{d_i})^{m_i}$ whose coordinates act by circular permutation of the factors on each $T^{d_i}M_{\omega^i}$ and put
$$
c_\omega(N):={\Tr}\left(v_\omega \left| \,{\rm Hom}_{S_\omega}\left(M_\omega,N\right)\right.\right).
$$

\begin{lemma}(1) The function $s_\omega$ decomposes into the following sum of Schur functions as
$$
s_\omega=\sum_{\mu\in\calP_n}c_\omega(M_\mu)s_\mu.
$$
(2) We have

$$
c_\omega(M_{\mu^*})=(-1)^{r(\omega)}c_{\omega^*}(M_\mu).
$$
\label{decomp}\end{lemma}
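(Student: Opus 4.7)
The plan is to translate both parts into identities of symmetric functions via the Frobenius characteristic of $S_n$, and then verify them by classical plethysm. For part (1), I would first rewrite the trace as a character sum
\begin{equation*}
c_\omega(M_\mu) = \frac{1}{|S_\omega|}\sum_{h \in S_\omega}\chi^{M_\omega}(v_\omega h)\,\chi^\mu(v_\omega h),
\end{equation*}
using the identification $\Hom_{S_\omega}(M_\omega, M_\mu) \cong (M_\omega^\vee \otimes M_\mu)^{S_\omega}$ together with the self-duality of symmetric group representations over $\C$. The key combinatorial input is the standard wreath-product fact that, on a single block $(S_m)^d$ with $v$ the cyclic generator, the element $v\cdot(h_1,\dots,h_d)$ has cycle type $d\lambda$ in $S_{md}$, where $\lambda\in\calP_m$ is the cycle type of $h_d h_{d-1}\cdots h_1 \in S_m$. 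Applying this block by block and matching with the plethystic expansion $s_{\omega^i}(\x^{d_i}) = \sum_{\lambda}(\chi^{\omega^i}_\lambda/z_\lambda)\, p_{d_i\lambda}$, one finds that $\langle s_\omega, s_\mu\rangle$ agrees term by term with the character sum, proving $s_\omega = \sum_\mu c_\omega(M_\mu)\,s_\mu$.

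For part (2) I would use the standard isomorphisms $M_{\mu'} \cong M_\mu \otimes \operatorname{sgn}_{S_n}$ and $M_{\omega'} \cong M_\omega \otimes \operatorname{sgn}_{S_\omega}$ (the latter because the sign character of $S_n$ restricts to the sign character of $S_\omega$). These provide a canonical vector space isomorphism
\begin{equation*}
\Hom_{S_\omega}(M_\omega, M_{\mu'}) \;\cong\; \Hom_{S_\omega}(M_{\omega'}, M_\mu),
\end{equation*}
on which the two natural $v_\omega$-actions differ by the scalar $\operatorname{sgn}_{S_n}(v_\omega)$ coming from the twist on the target. Counting cycles, the image of $v_\omega\in\prod_i(S_{d_i})^{m_i}$ inside $S_n$ consists, on each block, of $m_i|\omega^i|$ commuting $d_i$-cycles, so
\begin{equation*}
\operatorname{sgn}_{S_n}(v_\omega) = \prod_i (-1)^{m_i |\omega^i|(d_i - 1)} = (-1)^{\,n + \sum_i m_i |\omega^i|} = (-1)^{r(\omega)}.
\end{equation*}
Taking traces on both sides yields $c_\omega(M_{\mu'}) = (-1)^{r(\omega)}\,c_{\omega'}(M_\mu)$.

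The main obstacle will be the combinatorial bookkeeping in part (1): correctly handling the factors $(|\omega^i|!)^{d_i}$ and $m_i!$ in $|S_\omega|$ against the wreath-product structure of $S_\omega\rtimes\prod_i(S_{d_i})^{m_i}$, and ensuring that only the single fixed element $v_\omega$ appears in the trace rather than an average over permutations of identical blocks. Since $v_\omega$ does not mix different blocks, the identity should factor as a product of single-block plethystic identities, reducing the verification to the one-block case; the care needed is simply to confirm that the Hall inner product of $s_\mu$ with $\prod_i s_{\omega^i}(\x^{d_i})^{m_i}$ produces exactly the unaveraged sum over the $v_\omega$-coset, not an average over the permutations of the $m_i$ equal factors.
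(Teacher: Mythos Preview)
Your proposal is correct. For part~(1) the paper simply cites \cite[Proposition~6.2.5]{L1}, and what you sketch is essentially the argument behind that citation: the trace formula for $v_\omega$ on $(M_\omega^\vee\otimes M_\mu)^{S_\omega}$, the wreath-product cycle-type identity, and the plethystic expansion of $s_{\omega^i}(\x^{d_i})$ in power sums. Your caution about the factors $m_i!$ is well placed, but since $v_\omega$ lies in $\prod_i(S_{d_i})^{m_i}$ and does \emph{not} permute the $m_i$ identical blocks, the computation indeed factors block by block and no averaging over those permutations occurs.

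For part~(2) your route differs from the paper's. The paper invokes the explicit character formula from \cite[Proposition~6.2.4]{L1},
\[
c_\omega(M_\mu)=\sum_\rho\chi^\mu_\rho\sum_{\alpha:\ \bigcup_i d_i\cdot\alpha^i=\rho}\ \prod_i z_{\alpha^i}^{-1}\chi^{\omega^i}_{\alpha^i},
\]
substitutes $\chi^{\mu'}=\varepsilon\,\chi^\mu$ and $\chi^{(\omega^i)'}=\varepsilon\,\chi^{\omega^i}$, and reduces to the elementary sign identity $\varepsilon(d\cdot\lambda)=(-1)^{(d+1)|\lambda|}\varepsilon(\lambda)$. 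Your argument instead works directly at the level of modules: the isomorphism $\Hom_{S_\omega}(M_\omega,M_\mu\otimes\mathrm{sgn})\cong\Hom_{S_\omega}(M_\omega\otimes\mathrm{sgn},M_\mu)$, together with the observation that $v_\omega$ acts trivially on the one-dimensional $\mathrm{sgn}_{S_\omega}$ factor (it only permutes identical tensor factors), gives the single discrepancy $\mathrm{sgn}_{S_n}(v_\omega)=(-1)^{r(\omega)}$ from the target. This is cleaner and avoids unpacking the combinatorial formula; the paper's approach, on the other hand, makes the constituent sign $\varepsilon(d\cdot\lambda)=(-1)^{(d+1)|\lambda|}\varepsilon(\lambda)$ explicit, which can be useful elsewhere. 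Both compute the same sign $(-1)^{n+\sum_i m_i|\omega^i|}$ by the same underlying cycle count.
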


\begin{proof}The first assertion is \cite[Proposition 6.2.5]{L1}. Let
  us prove the second assertion. To alleviate the notation, we assume
  (without loss of generality) that all $m_i=1$
  i.e. $\omega=\{(d_i,\omega^i)\}_{i=1,\dots r}$. By \cite[Proposition
  6.2.4]{L1} we have
$$
c_\omega(M_\mu)=\sum_\rho\chi^\mu_\rho\sum_\alpha\left(\prod_{i=1}^rz_{\alpha^i}^{-1}\chi^{\omega^i}_{\alpha^i}\right)
$$
where the second sum runs over all
$\alpha=(\alpha^1,\dots,\alpha^r)\in\calP_{|\omega^1|}\times\cdots\times\calP_{|\omega^r|}$
such that $\bigcup_ id_i\cdot\alpha^i=\rho$ (recall that $d\cdot\mu$
is the partition obtained from $\mu$ by multiplying all parts of $\mu$
by $d$).

Using that $\chi^{\mu^*}=\varepsilon\otimes\chi^\mu$ where $\varepsilon$ is the sign character, we are reduced to proving the following identity

\begin{equation}
\varepsilon(\rho)=(-1)^{n+\sum_i|\alpha^i|}\prod_{i=1}^r\varepsilon(\alpha^i)
\label{var}\end{equation}
whenever $\bigcup_ id_i\cdot\alpha^i=\rho$.

We have

$$
\varepsilon(\rho)=\prod_ i\varepsilon(d_ i\cdot\alpha^i).
$$
Since $n=\sum_ id_ i|\alpha^i|$ the identity (\ref{var}) is a consequence of the following identity

$$
\varepsilon(d\cdot\lambda)=(-1)^{(d+1)|\lambda|}\varepsilon(\lambda)
$$
where $d$ is a positive integer and $\lambda$ a partition.
\end{proof}

\bigskip

{\bf Main result}

\bigskip

\noindent We can generalize this to a multi-type $\omhat=(\omega_1,\dots,\omega_k)$ with all $\omega_i$ of same size $n$, by replacing $S_\omega$, $M_\omega$ and $v_\omega$ by

$$
S_\omhat:=S_{\omega_1}\times\cdots\times S_{\omega_k},\hspace{.5cm} M_\omhat:=M_{\omega_1}\boxtimes\cdots\boxtimes M_{\omega_k},\hspace{.5cm}v_\omhat=(v_{\omega_1},\dots,v_{\omega_k})
$$
and for any $\bS_n$-module $N$ we define 

$$
c_\omhat(N):=\Tr\left(v_\omhat\,|\, {\rm Hom}_{S_\omhat}(M_\omhat,N)\right).
$$ 

\begin{remark}
If $N$ is of the form $N_1\boxtimes\cdots\boxtimes N_k$ with $N_i$ any $S_n$-module, then

$$
c_\omhat(N)=c_{\omega_1}(N_1)\cdots c_{\omega_k}(N_k).
$$
\end{remark}

Now let $N$ be an $\bS'_n$-module. We extend
trivially the action of $N_{\bS_n}(S_\omhat)$ on $M_\omhat$ to an
action of
$N_{\bS_n'}(S_\omhat)=N_{\bS_n}(S_\omhat)\times\langle \iota\rangle$
on $M_\omhat$. We thus get an action of
$N_{\bS_n'}(S_\omhat)/S_\omhat=\left(N_{\bS_n}(S_\omhat)/S_\omhat\right)\times\langle
\iota\rangle$ on ${\rm Hom}_{S_\omhat}\left(M_\omhat,N\right)$, and we
define

$$
c'_\omhat(N):={\Tr}\left(v_\omhat \,\iota\left| {\rm Hom}_{S_\omhat}(M_\omhat,N)\right.\right).
$$

Let $\calQ_n$ be the quiver variety defined in \S \ref{quiver} and let $\bM^\bullet_n$ be the graded $\bS'_n$-module defined by

$$
\bM^i_n=H_c^{2i+d}(\calQ_n)\otimes (\varepsilon^{\boxtimes k})
$$
where $\varepsilon ^{\boxtimes k}=\varepsilon\boxtimes\cdots\boxtimes\varepsilon$ with $\varepsilon$ the sign representation of $S_n$.

\begin{theorem}Let $\omhat\in(\T_n)^k$. 

(1) We have
$$
V_\omhat(q)=(-1)^{r(\omhat)}\sum_ic_{\omhat}(\bM^i_n)\, q^i.
$$

(2) We have
$$
V'_\omhat(q)=(-1)^{n(\omhat^*)+r(\omhat)}\sum_ic'_{\omhat}(\bM^i_n)\, q^i.
$$
\label{maintheo}\end{theorem}
\bigskip

From the above theorem and Theorem \ref{genmulti}(2) we have
$$
\H_\muhat(-q)=(-1)^{r'(\muhat)+r(\muhat)+n+1}\sum_ic'_\muhat(\bM^i_n)\, q^i
$$
and from Theorem \ref{genmulti}(1)  we also have
\begin{equation}
\H_\muhat(q)=\sum_ic_\muhat(\bM^i_n)\, q^i
\label{A}\end{equation}
from which we deduce the following formula as 
$$
r(\muhat)+r'(\muhat)\equiv k(\lceil n/2\rceil+n)\mod 2.
$$

\begin{corollary}
$$
c'_\muhat(\bM^i_n)=(-1)^{i+k(\lceil n/2\rceil +n)+n+1}c_{\muhat}(\bM^i_n).
$$
\end{corollary}

%
%

\section{Proof of Theorem \ref{maintheo}}

When $\omhat$ is a multi-partition, the assertion (1) is proved in \cite[End of proof of Theorem 2.3]{L2}.  We will prove the assertion (2) first in the case of multi-partitions and then deduce from it the case of arbitrary multi-types. The reduction of the general case to the multi-partition case is completely similar for $G^F$ and $G^{F'}$.

\subsection{Quiver varieties and Fourier transforms}

In this section, $\K=\overline{\F}_q$, $G=\GL_n(\K)$ and $\frak{g}=\gl_n(\K)$. We denote by $F:\g\rightarrow\g$ the standard Frobenius that raises matrix coefficients to their $q$-th power. We also denote by $F':\g\rightarrow\g$, $X\mapsto -{^t}F(X)$.

The conjugation action of $G$ on $\g$ is compatible with both
Frobenius endomorphisms $F$ and $F'$, i.e. 
$$
F(gXg^{-1})=F(g)F(X)F(g^{-1}),\hspace{.5cm}F'(gXg^{-1})=F'(g)F'(X)F'(g^{-1})
$$
for any $g\in G$ and $X\in\g$, and so $G^F$ (resp. $G^{F'}$) acts on $\g^F$ (resp. $\g^{F'}$).
\bigskip

\begin{nothing}\emph{Quiver variety.} \label{QV}Since for all $x\in\g$, the stabilizer $C_G(x)$ is connected, the set of  $G^F$-orbit of $\g^F$  (resp. the set of $G^{F'}$-orbits of $\g^{F'}$) is naturally in bijection with the set of $F$-stable (resp. $F'$-stable) $G$-orbits of $\g$, i.e. if $\calO$ is a $G$-orbit of $\g$ stable by the Frobenius, then any two rational elements of $\calO$ are rationnally conjugate.
\bigskip

Denote by $\tilde{\Xi}$ (resp. $\tilde{\Xi}'$) the set of $F$-orbits (resp. $F'$-orbits) of $\K$. 
\bigskip

 Analogously to conjugacy classes of $G^F$ and $G^{F'}$, the set of $F$-stable (resp. $F'$-stable) $G$-orbits of $\g$ is in bijection with the set $\calP_n(\tilde{\Xi})$ (resp. $\calP_n(\tilde{\Xi}')$) of all maps $f:\tilde{\Xi}\rightarrow\calP$ (resp. $f:\tilde{\Xi}'\rightarrow\calP$) such that
 
$$
|f|:=\sum_\xi |\xi| \,|f(\xi)|=n
$$
where $|\xi| $ denotes the size of the orbit $\xi$.
 \bigskip
 
 As for conjugacy classes, we can associated to any $f\in\calP_n(\tilde{\Xi})$ (resp. $f\in \calP_n(\tilde{\Xi}')$) a type $\t(f)\in\T_n$.
 \bigskip
 
 The types of the $F'$-stable semisimple regular $G$-orbits of $\g$ are of the form $\{(d_i,1)^{m_i}\}$ with 
 
 $$
 \sum_id_im_i=n,
 $$
 and are therefore parametrized by the partitions of $n$ and so by the conjugacy classes of $S_n$ : the partition of $n$ corresponding to $\{(d_i,1)^{m_i}\}_i$  is

 $$
 \sum_i \underbrace{d_i+\cdots+d_i}_{m_i}
 $$

 \bigskip
 
 For example, the types $(1,1)^2$ and $(2,1)$ are the types of the orbits of 
 
 $$
 \left(\begin{array}{cc}a&0\\0&b\end{array}\right),\hspace{1cm}\left(\begin{array}{cc}x&0\\0&-x^q\end{array}\right),
 $$
 where $a\neq b\in\{z^q=-z\}$, and $x\in\F_{q^2}\backslash\{z^q=-z\}$, corresponding respectively to the trivial and non-trivial element of $S_2$.
\bigskip

For short we will say that an $F'$-stable semisimple regular $G$-orbit of $\g$ is of type $w\in S_n$ if its type corresponds to the conjugacy class of $w$ in $S_n$.

\bigskip

For a $k$-tuple $\w=(w_1,\dots,w_k)\in\bS_n$, we choose a generic $k$-tuple $\calC^\w=(\calC^{w_1}\dots,\calC^{w_k})$ of $F'$-stable semisimple regular $G$-orbit of $\g$ of type $\w$ and we consider the associated quiver variety

$$
\calQ^\w:=\calV^\w/\!/\PGL_n
$$
where
$$
\calV^\w:=\left\{(X_1,\dots,X_k)\in\calC^{w_1}\times\cdots\times\calC^{w_k}\,\left|\, \sum_iX_i=0\right\}\right.
$$
\end{nothing}
 
\begin{nothing}\emph{Introducing Fourier transforms.} For the definition and properties of these Fourier transforms we follow G. Lehrer \cite{Lehrer}. 

Denote by $\calC(\g^{F'})$ the $\overline{\Q}_\ell$-vector space of functions $\g^{F'}\rightarrow\overline{\Q}_\ell$ constant on $G^{F'}$-orbits which we equip with $\langle\,,\,\rangle$ defined by

$$
\langle f_1,f_2\rangle_{\g^{F'}}=\frac{1}{|G^{F'}|}\sum_{x\in\g^{F'}}f_1(x)\overline{f_2(x)},
$$
for any $f_1,f_2\in\calC(\g^{F'})$ where $\overline{\Q}_\ell\rightarrow\overline{\Q}_\ell, x\mapsto \overline{x}$ is the involution corresponding to the complex conjugation under an isomorphism $\overline{\Q}_\ell\simeq\C$ we have fixed.

Fix a non-trivial additive character $\psi:\F_q\rightarrow\overline{\Q}_\ell$. Notice that the trace map $\Tr$ on $\g$ satisfies

$$
\Tr(F'(x)F'(y))=\Tr(xy)^q.
$$
for all $x,y\in\g$. Define the Fourier transform $\calF^\g:\calC(\g^{F'})\rightarrow\calC(\g^{F'})$ by

$$
\calF^\g(f)(y)=\sum_{x\in\g^{F'}}\psi(\Tr(yx))f(x)
$$
for any $y\in\g^{F'}$ and $f\in\calC(\g^{F'})$.

Consider the convolution product $*$ on $\calC(\g^{F'})$ defined by 

$$
(f_1*f_2)(x)=\sum_{y+z=x}f_1(y)f_2(z),
$$
for $x\in\g^{F'}$, $f_1,f_2\in\calC(\g^{F'})$.

We have the following straightforward proposition.

\begin{proposition}(1) We have

$$
\calF^\g(f_1*f_2)=\calF^\g(f_1)\calF^\g(f_2)
$$
for all $f_1,f_2\in\calC(\g^{F'})$.

\noindent (2) For $f\in\calC(\g^{F'})$ we have

$$
|\g^{F'}|\cdot f(0)=\sum_{x\in\g^{F'}}\calF^\g(f)(x).
$$
\end{proposition}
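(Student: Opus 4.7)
Both assertions are standard Fourier-analytic manipulations on a finite abelian group (here the additive group $\g^{F'}$), and the only input beyond unwinding definitions is the orthogonality of characters for the bilinear pairing $(x,y)\mapsto\psi(\Tr(xy))$. No deep structure of $G^{F'}$ is used; the argument would go through verbatim for any non-degenerate symmetric $\F_q$-bilinear form on a finite-dimensional $\F_q$-vector space.

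For part (1), I would simply expand the left-hand side and interchange the order of summation. Writing
\begin{align*}
\calF^\g(f_1*f_2)(y)&=\sum_{x\in\g^{F'}}\psi(\Tr(yx))\sum_{\substack{u,v\in\g^{F'}\\u+v=x}}f_1(u)f_2(v)\\
&=\sum_{u,v\in\g^{F'}}\psi(\Tr(y(u+v)))f_1(u)f_2(v),
\end{align*}
and then using the additivity $\psi(\Tr(y(u+v)))=\psi(\Tr(yu))\psi(\Tr(yv))$ lets the double sum factor as $\calF^\g(f_1)(y)\calF^\g(f_2)(y)$. The only thing to notice is that the convolution is indeed indexed by pairs $(u,v)$ with $u+v=x$ as above, which is built into the definition.

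For part (2), the key fact is the following non-degeneracy statement: for every $x\in\g^{F'}$,
$$
\sum_{y\in\g^{F'}}\psi(\Tr(yx))=\begin{cases}|\g^{F'}|&\text{if }x=0,\\ 0&\text{if }x\neq 0.\end{cases}
$$
The $x=0$ case is immediate since $\psi(0)=1$; for $x\neq 0$ one uses that $y\mapsto\psi(\Tr(yx))$ is a non-trivial additive character of $\g^{F'}$, so its sum vanishes. Non-triviality follows because the trace form $\Tr(yx)$ on $\g^{F'}$ is a non-degenerate $\F_q$-bilinear form (it is the restriction to the $F'$-fixed points of the trace form on $\g$, and one checks that $F'$-invariance preserves non-degeneracy). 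Then interchanging the order of summation,
\begin{align*}
\sum_{y\in\g^{F'}}\calF^\g(f)(y)
&=\sum_{y\in\g^{F'}}\sum_{x\in\g^{F'}}\psi(\Tr(yx))f(x)\\
&=\sum_{x\in\g^{F'}}f(x)\sum_{y\in\g^{F'}}\psi(\Tr(yx))\\
&=|\g^{F'}|\cdot f(0),
\end{align*}
as claimed.

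The main (very mild) obstacle is verifying non-degeneracy of the trace pairing on $\g^{F'}$; this reduces to checking that $\Tr(x\,\cdot\,)$ is not identically zero on $\g^{F'}$ for $x\neq 0$, which is immediate since the trace form on $\g=\gl_n$ is already non-degenerate and $F'$-fixed points of a non-degenerate form on an $\F_q$-rational space remain non-degenerate. Once this is in hand, both assertions are formal. This is why the authors call the proposition straightforward.
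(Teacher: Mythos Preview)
Your argument is correct and is exactly the standard computation the authors have in mind: the paper does not give a proof at all, merely labeling the proposition ``straightforward,'' and your expansion of the convolution together with the character-sum identity coming from non-degeneracy of the trace pairing on $\g^{F'}$ is precisely the routine verification this label refers to.
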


For a $G^{F'}$-orbit $O$ of $\g^{F'}$, let $1_O\in\calC(\g^{F'})$ denote the characteristic function of $O$, i.e.

$$
1_O(x)=\begin{cases}1&\text{ if } x\in O\\0&\text{ otherwise.}\end{cases}
$$

\begin{proposition}We have

$$
|(\calQ^\w)^{F'}|=\frac{(q+1)}{|\g^{F'}|}\left\langle\prod_{i=1}^k\calF^\g\left(1_{(\calC^{w_i})^{F'}}\right),1\right\rangle_{\g^{F'}}
$$

\label{quivercount}\end{proposition}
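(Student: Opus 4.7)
The strategy is to interpret $|(\calV^\w)^{F'}|$ as the value at $0$ of a $k$-fold convolution of characteristic functions, pass to Fourier transforms via the properties in the preceding proposition, and then descend from $\calV^\w$ to the GIT quotient $\calQ^\w$ using the freeness of the $\PGL_n$-action.

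First, I would rewrite $|(\calV^\w)^{F'}|$ as a convolution. By definition
\[
|(\calV^\w)^{F'}|=\sum_{X_1,\ldots,X_k\in\g^{F'}}\prod_{i=1}^k1_{(\calC^{w_i})^{F'}}(X_i)\cdot\delta_0\!\left(\textstyle\sum_i X_i\right),
\]
which is exactly $\bigl(1_{(\calC^{w_1})^{F'}}*\cdots*1_{(\calC^{w_k})^{F'}}\bigr)(0)$. Applying part~(2) of the proposition (the Fourier inversion at $0$) and then part~(1) (which turns convolution into pointwise product), this becomes
\[
|(\calV^\w)^{F'}|=\frac{1}{|\g^{F'}|}\sum_{x\in\g^{F'}}\prod_{i=1}^k\calF^\g\!\left(1_{(\calC^{w_i})^{F'}}\right)\!(x)=\frac{|G^{F'}|}{|\g^{F'}|}\left\langle\prod_{i=1}^k\calF^\g\!\left(1_{(\calC^{w_i})^{F'}}\right),1\right\rangle_{\g^{F'}},
\]
using that $\langle\,\cdot\,,1\rangle_{\g^{F'}}=|G^{F'}|^{-1}\sum_{x\in\g^{F'}}(\cdot)(x)$ and that Fourier transforms of real-valued class functions are real-valued (or, more cleanly, observing that the product of the $\calF^\g(1_{(\calC^{w_i})^{F'}})$ is $G^{F'}$-invariant so the complex conjugation in the pairing is harmless here).

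Next I would pass from $\calV^\w$ to $\calQ^\w$. Since the $\PGL_n$-action on $\calV^\w$ is free (recall that in the definition of the quiver variety all $\GL_n$-orbits on $\calV^\w$ are closed, so the GIT quotient is a geometric quotient), the Lang--Steinberg theorem applied to the connected group $\PGL_n$ gives
\[
|(\calQ^\w)^{F'}|=\frac{|(\calV^\w)^{F'}|}{|\PGL_n^{F'}|}.
\]
Combining this with the formula for $|(\calV^\w)^{F'}|$ above yields
\[
|(\calQ^\w)^{F'}|=\frac{|G^{F'}|}{|\PGL_n^{F'}|\cdot|\g^{F'}|}\left\langle\prod_{i=1}^k\calF^\g\!\left(1_{(\calC^{w_i})^{F'}}\right),1\right\rangle_{\g^{F'}}.
\]

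Finally I would compute the constant $|G^{F'}|/|\PGL_n^{F'}|$. From the short exact sequence $1\to Z(G)\to G\to\PGL_n\to 1$ together with Hilbert~90 (which kills $H^1(F',Z(G))=H^1(F',\Gm)$), the induced map $G^{F'}\to\PGL_n^{F'}$ is surjective with kernel $Z(G)^{F'}$. Since $F'$ acts on $Z(G)=\Gm$ by $x\mapsto x^{-q}$, we have $|Z(G)^{F'}|=q+1$, so $|G^{F'}|/|\PGL_n^{F'}|=q+1$ and the formula follows.

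The only step that is not entirely routine is justifying $|(\calQ^\w)^{F'}|=|(\calV^\w)^{F'}|/|\PGL_n^{F'}|$, which relies on the freeness of the action (already established in the construction of $\calQ^\w$) together with Lang--Steinberg for $\PGL_n$; everything else is a direct computation from the Fourier and convolution formalism set up in the preceding proposition.
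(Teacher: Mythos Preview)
Your argument is correct and follows essentially the same route as the paper: express $|(\calV^\w)^{F'}|$ as a convolution evaluated at $0$, convert to a sum of products of Fourier transforms via the preceding proposition, and divide by $|\PGL_n^{F'}|$ using freeness of the action and Lang--Steinberg. The only cosmetic differences are the order of the two steps and your more detailed justification of $|G^{F'}|/|\PGL_n^{F'}|=q+1$; note also that your parenthetical about real-valuedness is unnecessary, since $\overline{1}=1$ already makes the pairing with the constant function~$1$ a plain sum.
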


\begin{proof}Since $\PGL_n(\K)$ is connected and acts freely on $\calV^\w$, we have

$$
|(\calQ^\w)^{F'}|=\frac{|(\calV^\w)^{F'}|}{|\PGL_n(\K)^{F'}|}=\frac{(q+1)|(\calV^\w)^{F'}|}{|\GL_n(\K)^{F'}|}.
$$

On the other hand

\begin{align*}
|(\calV^\w)^{F'}|&=\#\left\{(X_1,\dots,X_k)\in(\calC^{w_1})^{F'}\times\cdots\times(\calC^{w_k})^{F'}\left|\sum_iX_i=0\right\}\right.\\
&=\left(1_{(\calC^{w_1})^{F'}}*\cdots* 1_{(\calC^{w_k})^{F'}}\right)(0)\\
&=\frac{1}{|\g^{F'}|}\sum_{x\in\g^{F'}}\prod_{i=1}^k\calF^\g\left(1_{(\calC^{w_i})^{F'}}\right)(x).
\end{align*}
\end{proof}
\end{nothing}

\subsection{Fourier transforms and irreducible characters: Springer's theory}

Consider a type of the form  $\omega=\{(d_i,1)^{m_i}\}_{i=1,\dots,r}\in\T_n$ (we call types of this form \emph{regular semisimple}), and denote by

$$
T_\omega^{F'}=\prod_{i, \,d_i \text{ even}}\GL_1(\F_{q^{d_i}})^{m_i}\prod_{i, \,d_i\text{ odd}}\GU_1(\F_{q^{d_i}})^{m_i}
$$
its associated rational maximal torus. 

An  irreducible character $\calX_f$ of $G^{F'}$ of type  $\t(f)=\omega$ is called \emph{regular semisimple}. 

We have
\begin{equation}
\calX_f=(-1)^{r(\omega)}R_{T_\omega^{F'}}^{G^{F'}}(\theta_f)
\label{DLfor}\end{equation}
for some linear character $\theta_f$ of $T_\omega^{F'}$ (see Theorem \ref{LS}).

Moreover, for all $g\in G^{F'}$ with Jordan decomposition $g=g_sg_u$,  we have the following character formula \cite[Theorem 4.2]{DL}

\begin{equation}
R_{T_\omega^{F'}}^{G^{F'}}(\theta_f)(g)=\frac{1}{|C_G(g_s)^{F'}|}\sum_{\{h\in G^{F'}\,|\, g_s\in hT_\omega h^{-1}\}}Q_{hT_\omega^{F'} h^{-1}}^{C_G(g_s)^{F'}}(g_u)\theta_f(h^{-1}g_s h)
\label{charfor}\end{equation}
where 
$$
Q_{hT_\omega^{F'} h^{-1}}^{C_G(g_s)^{F'}}:=R_{hT_\omega^{F'} h^{-1}}^{C_G(g_s)^{F'}}(1_{\{1\}})
$$
is the so-called \emph{Green function} defined by Deligne-Lusztig \cite{DL}. 

Denote by $\t_\omega$ the Lie algebra of $T_\omega$. In \cite{L0'}, we defined a Lie algebra version of Deligne-Lusztig induction, namely we defined a $\overline{\Q}_\ell$-linear map
$$
R_{\t_\omega^{F'}}^{\g^{F'}}:\calC(\t_\omega^{F'})\rightarrow\calC(\g^{F'})
$$
by the same formula as (\ref{charfor}), i.e.
\begin{equation}
R_{\t_\omega^{F'}}^{\g^{F'}}(\eta)(x)=\frac{1}{|C_G(x_s)^{F'}|}\sum_{\{h\in G^{F'}\,|\, x_s\in h\t_\omega h^{-1}\}}Q_{h\t_\omega^{F'} h^{-1}}^{C_\g(x_s)^{F'}}(x_n)\, \eta(h^{-1}g_s h)
\label{DL-Let}\end{equation}
for $x\in\g^{F'}$ with Jordan decomposition $x=x_s+x_n$ and where 
$$
Q_{h\t_\omega^{F'} h^{-1}}^{C_\g(x_s)^{F'}}(x_n):=Q_{hT_\omega^{F'} h^{-1}}^{C_G(g_s)^{F'}}(x_n+1).
$$
We have the following special case of  \cite[Theorem 7.3.3]{L0}.

\begin{theorem}Let $\calC_h$ be a regular semisimple orbit of $\g^{F'}$ of type $\t(h)=\omega$, then 

$$
\calF^{\g^{F'}}(1_{\calC_h})=(-1)^{r'(\omega)}q^{\frac{n^2-n}{2}}R_{\t_\omega^{F'}}^{\g^{F'}}(\eta_h)
$$
where $\eta_h:\t_\omega^{F'}\rightarrow\overline{\Q}_\ell$, $z\mapsto \psi(\Tr(zx))$ with $x\in\t_\omega^{F'}$ a fixed representative of $\calC_h$ in $\t_\omega^{F'}$.
\label{5.3}\end{theorem}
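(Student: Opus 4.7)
The plan is to deduce the statement from the general Lie-algebra Fourier transform formula \cite[Theorem 7.3.3]{L0}, specialising it to the regular semisimple case where all the data simplify drastically. Equivalently, one may argue directly via the commutation of $\calF^\g$ with Deligne-Lusztig induction at the level of Lie algebras. I will outline the latter approach, which is more transparent.

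First, I would relate $1_{\calC_h}$ to the induction $R_{\t_\omega^{F'}}^{\g^{F'}}(1_{\{x\}})$. Since $\calC_h$ is regular semisimple, $C_G(x)=T_\omega$ is connected, so by Lang-Steinberg one has $\calC_h^{F'}\simeq G^{F'}/T_\omega^{F'}$. Evaluating the defining character formula for $R_{\t_\omega^{F'}}^{\g^{F'}}(1_{\{x\}})$ at $y$ with Jordan decomposition $y=y_s+y_n$, the condition $1_{\{x\}}(h^{-1}y_sh)\neq 0$ forces $y_s\in\calC_h^{F'}$ and, since $C_\g(y_s)$ is a torus, also $y_n=0$. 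The Green function $Q_{\t_\omega^{F'}}^{\t_\omega^{F'}}$ at $0$ is an explicit nonzero constant, and one obtains an identity
$$
R_{\t_\omega^{F'}}^{\g^{F'}}(1_{\{x\}}) \;=\; c\cdot 1_{\calC_h}
$$
for an explicit $c$.

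Next I would apply the commutation relation
$$
\calF^\g\circ R_{\t_\omega^{F'}}^{\g^{F'}} \;=\; \epsilon\, q^{(n^2-n)/2}\, R_{\t_\omega^{F'}}^{\g^{F'}}\circ \calF^{\t_\omega},
$$
a consequence of the Lie-algebra Deligne-Lusztig theory developed in \cite{L0'}. The factor $q^{(n^2-n)/2}$ is half the dimension of $G/T_\omega$ and reflects the Gauss-sum normalisation obtained by integrating $\psi\circ\Tr$ over the complement of $\t_\omega$ in $\g$, while $\epsilon\in\{\pm1\}$ is governed by the rank datum of $(G,T_\omega,F')$. Applying this to $1_{\{x\}}$ and using $\calF^{\t_\omega}(1_{\{x\}})(z)=\psi(\Tr(xz))=\eta_h(z)$, the right-hand side becomes $\epsilon\, q^{(n^2-n)/2}\, R_{\t_\omega^{F'}}^{\g^{F'}}(\eta_h)$, which together with the first step yields the claimed identity modulo the exact determination of the sign.

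Finally I would verify that the overall sign equals $(-1)^{r'(\omega)}$ by identifying $r'(\omega)=\lceil n/2\rceil+\sum_i m_i$ (here each $|\omega^i|=1$) with the sum of the $\F_q$-ranks of $(G,F')$ and $(T_\omega,F')$, exactly as in the definition of $r'(f)$ in Section~\ref{premF'} and as it appears in the Harish-Chandra-Lusztig sign of Theorem~\ref{LS}. The main obstacle is precisely this sign bookkeeping: \cite{L0'} is written for the split Frobenius $F$, and while its argument is formal enough that replacing $F$ by $F'$ changes nothing in the structure of the proof, one must carefully track how the rank datum flips from $r(\omega)$ to $r'(\omega)$ under the $F'$-twist. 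Once this is matched the theorem follows.
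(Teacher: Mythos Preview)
Your proposal is correct and in fact coincides with the paper's treatment: the paper gives no proof at all but simply records the statement as a special case of \cite[Theorem 7.3.3]{L0}, exactly as you note in your opening sentence. Your alternative direct argument via the commutation of $\calF^\g$ with Lie-algebra Deligne--Lusztig induction is also sound and is essentially what the cited theorem amounts to in the regular semisimple case; one small sharpening is that in your first step the constant $c$ is in fact equal to $1$, since the Green function $Q_{\t_\omega^{F'}}^{\t_\omega^{F'}}(0)=1$ and the counting of transporters cancels exactly against $|C_G(y_s)^{F'}|=|T_\omega^{F'}|$.
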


The above formula shows that the computation of the values of
$\calF^{\g^{F'}}(1_{\calC_f})$ and $\calX_f$ is identical. This
connection between Fourier transforms and characters of finite
reductive groups was first observed and investigated by T. A. Springer
\cite{Spr}\cite{Spr1}\cite{K} and later by G. Lusztig \cite{Lus-Four} and G. I. Lehrer \cite{Lehrer}\cite{Lehrer1}. As a consequence we get the additive
version of Theorem \ref{sum-theo}(2).

\begin{theorem}Assume that $(\calC_1,\dots,\calC_k)$ is a generic tuple of $F'$-stable regular semisimple orbits of $\g^{F'}$ of type $\omhat=(\omega_1,\dots,\omega_k)$. Then for any type $\tau\in\T_n$ we have
\begin{equation}
\sum_{f\in\calP_n(\tilde{\Xi}'), \t(f)=\tau}\prod_{i=1}^k\calF^\g(1_{\calC_i^{F'}})(\calC'_f)=q^{\frac{k(n^2-n)+2}{2}}c^o_\tau (-1)^{r'(\omhat)}\prod_{i=1}^k \left\langle \tilde{H}_\tau(\x_i;-q),s_{\omega_i}\right\rangle,
\label{for}\end{equation}
where $\calC'_f$ denotes the $G^{F'}$-orbit of $\g^{F'}$ corresponding to $f$.
\label{sum'}\end{theorem}

\begin{proof} In the LHS of Formula (\ref{for}) we first replace $\calF^\g(1_{\calC_i^{F'}})$ by $R_{\t_{\omega_i}^{F'}}^{\g^{F'}}(\eta_i)$ (see Theorem \ref{5.3}).  Using Formula (\ref{DL-Let}),  the proof is completely similar to its multiplicative version (Theorem \ref{sum-theo}(2)). For more details see the proof of the analogous statement in the case of the Frobenius endomorphism $F$ \cite[Lemma 6.2.3]{HLRV}.

\end{proof}

\begin{theorem}Let $(\calX'_1,\dots,\calX'_k)$ be a generic $k$-tuple of regular semisimple  irreducible characters of $G^{F'}$ and let $(\calC_1,\dots,\calC_k)$ be a generic $k$-tuple of $F'$-stable regular semisimple orbits of $\g^{F'}$ of same type as $(\calX'_1,\dots,\calX'_k)$. Then

$$
\left\langle\calX'_1\otimes\cdots\otimes\calX'_k,1\right\rangle_{G^{F'}}=q^{-\frac{1}{2}{\rm dim}\,\calQ}\frac{(q+1)}{|\g^{F'}|}\left\langle\prod_i\calF^\g(1_{\calC_i^{F'}}),1\right\rangle_{\g^{F'}}
$$
\label{comparison}\end{theorem}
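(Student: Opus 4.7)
The plan is to expand the right-hand side by unfolding the inner product $\langle\cdot,\cdot\rangle_{\g^{F'}}$, group the resulting sum over $\g^{F'}$ by $G^{F'}$-orbits and then by their type, and finally apply Theorem \ref{sum'} to recognize the Fourier-transform calculation as exactly the additive shadow of the character computation performed in the proof of Theorem \ref{multi'}.

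First I would write
$$\frac{q+1}{|\g^{F'}|}\left\langle\prod_{i=1}^k\calF^\g(1_{\calC_i^{F'}}),1\right\rangle_{\g^{F'}} = \frac{q+1}{|\g^{F'}|\,|G^{F'}|}\sum_{x\in\g^{F'}}\prod_{i=1}^k\calF^\g(1_{\calC_i^{F'}})(x).$$
Since each $\calF^\g(1_{\calC_i^{F'}})$ is $G^{F'}$-invariant, the inner sum is constant on $G^{F'}$-orbits and can be regrouped over $\calP_n(\tilde{\Xi}')$ using $|\calC'_f|/|G^{F'}|=1/a'_{\t(f)}(q)$. A further grouping by type $\tau\in\T_n$ followed by Theorem \ref{sum'} replaces the inner sum by $q^{(k(n^2-n)+2)/2}c^o_\tau(-1)^{r'(\omhat)}\prod_i\langle\tilde{H}_\tau(\x_i;-q),s_{\omega_i}\rangle$.

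Next, I would use Proposition \ref{a'} to replace $a'_\tau(q)=(-1)^n a_\tau(-q)$ and collect all powers of $q$. The three contributions are the prefactor $q^{-\dim\calQ/2}$, the normalization $1/|\g^{F'}|=q^{-n^2}$, and the $q^{(k(n^2-n)+2)/2}$ coming from Theorem \ref{sum'}. Using $\dim\calQ=n^2(k-2)-kn+2$, these three exponents sum to zero, and the RHS collapses to
$$(q+1)(-1)^{r'(\omhat)+n}\sum_{\tau\in\T_n}c^o_\tau\,\frac{1}{a_\tau(-q)}\prod_{i=1}^k\langle\tilde{H}_\tau(\x_i;-q),s_{\omega_i}\rangle.$$

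Finally, I would match this against the penultimate identity in the proof of Theorem \ref{multi'}, namely
$$V'_\omhat(q)=(-1)^{r'(\omhat)+n(\omhat')+n}(q+1)\sum_{\tau\in\T_n}c^o_\tau\,\frac{1}{a_\tau(-q)}\prod_{i=1}^k\langle\tilde{H}_\tau(\x_i;-q),s_{\omega_i}\rangle.$$
The only discrepancy is the extra sign $(-1)^{n(\omhat')}$. Under the hypothesis of the theorem each $\omega_i=\{(d_{ij},1)^{m_{ij}}\}$ is regular semisimple, so every partition involved is the self-dual $(1)$ with $n((1))=0$; hence $n(\omhat')=0$ and the two expressions are identical. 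The main obstacle I anticipate is exactly this sign bookkeeping: Theorem \ref{sum'} omits the factor $(-1)^{n(\omhat')}$ that appears in the multiplicative analogue Theorem \ref{sum1'}, and one must check that the semisimple regular hypothesis makes this missing factor trivial. Everything else is a direct additive translation of the character-theoretic calculation in Theorem \ref{multi'}.
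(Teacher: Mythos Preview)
Your proposal is correct and follows essentially the same route as the paper's proof: expand the inner product over $\g^{F'}$, regroup by orbit and then by type, apply Theorem~\ref{sum'} together with Proposition~\ref{a'}, and compare with the computation in the proof of Theorem~\ref{multi'}. The paper organises the bookkeeping slightly differently (it first isolates $\left\langle\prod_i\calF^\g(1_{\calC_i^{F'}}),1\right\rangle$ and passes explicitly through $\H_\omhat(-q)$ before invoking the dimension formula~(\ref{dim}) at the end), whereas you carry the prefactors $q^{-\dim\calQ/2}$ and $|\g^{F'}|^{-1}$ from the start and check the exponent cancellation directly; but the substance is identical, including your observation that $n(\omhat')=0$ in the regular semisimple case.
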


\begin{proof}The analogous formula in the case of the standard Frobenius $F$ instead of $F'$ is a particular case of \cite[Theorem 6.9.1]{L1} and the proof for $F'$ is completely similar.  However, since  the proof of [loc. cite] simplifies in the regular semisimple case, we give it for the convenience of the reader.

For each $i=1,\dots,k$, let $\omega_i$ be the common type of $\calX'_i$ and $\calC_i$. Then

\begin{align*}
\left\langle\prod_i\calF^\g(1_{\calC_i^{F'}}),1\right\rangle&=\frac{1}{|G^{F'}|}\sum_{x\in\g^{F'}}\prod_i\calF^\g(1_{\calC_i^{F'}})(x)\\
&=\sum_{f\in\calP_n(\tilde{\Xi}')}\frac{1}{a'_f(q)}\prod_i\calF^\g(1_{\calC_i^{F'}})(\calC'_f)
\end{align*}
where for $f\in\calP_n(\tilde{\Xi}')$, $\calC'_f$ is the associated $G^{F'}$-orbit of $\g^{F'}$ and $a'_f(q)$ the size of the stabilizer in $G^{F'}$ of an element of $\calC'_f$.

We thus have
\begin{align*}
\left\langle\prod_i\calF^\g(1_{\calC_i^{F'}}),1\right\rangle&=\sum_{\tau\in\T_n}\frac{1}{a'_\tau(q)}\sum_{f\in\calP_n(\tilde{\Xi}'),\t(f)=\tau}\prod_i\calF^\g(1_{\calC_i^{F'}})(\calC'_f)\\
&=q^{\frac{k(n^2-n)+2}{2}}(-1)^{r'(\omhat)+n}\sum_{\tau\in\T_n}\frac{1}{a_\tau(-q)}c_\tau^o\prod_{i=1}^k\left\langle\tilde{H}_\tau(\x_i;-q),s_{\omega_i}\right\rangle\\
&=\frac{q^{\frac{k(n^2-n)+2}{2}}(-1)^{r'(\omhat)+n+1}}{q+1}\H_\omhat(-q)\\
&=\frac{q^{\frac{k(n^2-n)+2}{2}}}{q+1}\langle\calX'_1\otimes\cdots\otimes\calX'_k,1\rangle.
\end{align*}
The second equality is a consequence of Theorem \ref{sum'} and the last equality follows from Theorem \ref{genmulti}(2)  and so Theorem \ref{comparison} follows from (\ref{dim}).

\end{proof}

\subsection{Proof of Theorem \ref{maintheo}}\label{5.4}
 
We first prove the theorem when each coordinate of $\omhat$ is a regular semisimple type. We then deduce the case where $\omhat$ is a multi-partition, i.e. each coordinate of $\omhat$ is of the form $(1,\mu)$ with $\mu$ a partition. We finally deduce the general case from the multi-partition case.
 
\begin{nothing}\emph{Semi-simple regular case.} We saw in \S \ref{QV}, that regular semisimple types in $\T_n$ are parametrized by the conjugacy classes of $S_n$. Assume that all coordinates of $\omhat=(\omega_1,\dots,\omega_k)$ are regular semisimple. The element $v_\omhat\in\bS_n$ defined in \S \ref{MQ}   is an element in the corresponding conjugacy class of $\bS_n$. 

Let $(\calX'_1,\dots,\calX'_k)$ be a $k$-tuple of irreducible characters of $G^{F'}$ of type $\omhat$. From Theorem \ref{comparison} and Proposition \ref{quivercount}, we get the following identity
$$
\langle \calX'_1\otimes\cdots\otimes\calX'_k,1\rangle_{G^{F'}}=q^{-\frac{{\rm dim}\, \calQ}{2}}|(\calQ^{v_\omhat})^{F'}|.
$$
On the other hand we can follow line by line the proof of \cite[Theorem 2.6]{HLRV2} to get the following one.

\begin{theorem}We have

$$
|(\calQ^{v_\omhat})^{F'}|=\sum_i\Tr\left(v_\omhat\iota\,|\, H_c^{2i}(\calQ)\right)\, q^i.
$$

\end{theorem}

As 

$$
\varepsilon^{\boxtimes k}(v_\omhat)=(-1)^{r(\omhat)}
$$
we have

\begin{align*}
|(\calQ^{v_\omhat})^{F'}|&=q^{\frac{{\rm dim}\, \calQ}{2}}(-1)^{r(\omhat)}\sum_i\Tr\left(v_\omhat\iota\,|\, \bM^i_n\right)\, q^i\\
&=q^{\frac{{\rm dim}\, \calQ}{2}}(-1)^{r(\omhat)}\sum_i c'_\omhat(\bM^i_n)\, q^i
\end{align*}
as $M_\omhat$ is trivial. We thus get Theorem \ref{maintheo} in the regular semisimple case as $n(\omhat^*)=0$.

\end{nothing}

\begin{nothing}\emph{Multi-partition case.} First of all notice that  if $\lambda$ is a partition 
$$
\underbrace{\lambda_1+\cdots+\lambda_1}_{m_1}+\underbrace{\lambda_2+\cdots+\lambda_2}_{m_2}+\cdots
$$
with $\lambda_i\neq\lambda_j$ for $i\neq j$, then 

$$
p_\lambda=s_\omega
$$
where $\omega$ is the regular semisimple type $\{(\lambda_i,1)^{m_i}\}$. In the following we will write $[\lambda]$ for the regular semisimple type associated to a partition $\lambda$. 

Assume now that $\omhat$ is a multi-partition
$\muhat=(\mu^1,\dots,\mu^k)$, i.e. the $i$-coordinate of $\omhat$ is
the type $(1,\mu^i)$. Decomposing Schur functions into power sums
functions $p_\lambda$ we get
$$
\H_\muhat(-q)=\sum_\lambdahat z_\lambdahat^{-1}\chi^\muhat_\lambdahat \H_{[\lambdahat]}(-q)
$$
Using the theorem for regular semisimple types together with Theorem \ref{genmulti}(2), we get
\begin{align*}
\H_\muhat(-q)&=\sum_\lambdahat z_\lambdahat^{-1}\chi^\muhat_\lambdahat (-1)^{r'([\lambdahat])+r([\lambdahat])+n+1}\sum_ic'_{[\lambdahat]}(\bM^i_n) q^i\\
&=(-1)^{n+1}\sum_i\left(\sum_\lambdahat z_\lambdahat^{-1}\chi^\muhat_\lambdahat (-1)^{r'([\lambdahat])+r([\lambdahat])}\Tr\left(v_{[\lambdahat]}\,\iota\,| \,\bM^i_n\right)\right)\, q^i
\end{align*}
Therefore 
$$
(-1)^{r'(\muhat)+n(\muhat^*)+n+1}\H_\muhat(-q)=(-1)^{n(\muhat^*)}\sum_i\left(\sum_\lambdahat z_\lambdahat^{-1}\chi^\muhat_\lambdahat (-1)^{r'([\lambdahat])+r'(\muhat)+r([\lambdahat])}\Tr\left(v_{[\lambdahat]}\,\iota\,|\, \bM^i_n\right)\right)\, q^i
$$
However, 

$$
(-1)^{r'(\muhat^*)+r'([\lambdahat])}=(-1)^{r([\lambdahat])}.
$$
and so 
\begin{align*}
(-1)^{r'(\muhat)+n(\muhat^*)+n+1}\H_\muhat(-q)&=(-1)^{n(\muhat^*)}\sum_i\left(\sum_\lambdahat z_\lambdahat^{-1}\,\chi^{\muhat}_\lambdahat \,\Tr\left(v_{[\lambdahat]}\,\iota\,|\, \bM^i_n\right)\right)\, q^i\\
&=(-1)^{n(\muhat^*)}\sum_i\Tr\left(\iota \,\left| {\rm Hom}_{\bS_n}(M_{\muhat},\bM^i_n)\right.\right)\, q^i\\
&=(-1)^{n(\muhat^*)}\sum_ic'_{\muhat}(\bM^i_n)\, q^i
\end{align*}
hence the result for multi-partitions by Theorem \ref{genmulti}(2) as $r(\muhat)$ is even.
\end{nothing}

\begin{nothing}\emph{General case.} Assume now that $\omhat\in(\T_n)^k$ is arbitrary. By Lemma \ref{decomp} we have

\begin{align*}
\H_\omhat(-q)&=\sum_{\muhat \in(\calP_n)^k}c_\omhat(M_\muhat)\H_\muhat(-q)\\
&=\sum_\muhat c_\omhat(M_\muhat)(-1)^{r'(\muhat)+n+1}\sum_ic_{\muhat}'(\bM^i_n)q^i\\
&=(-1)^{n+1}\sum_i\sum_\muhat (-1)^{r'(\muhat)} c_\omhat(M_{\muhat})c_\muhat'(\bM^i_n)q^i
\end{align*}
We thus have

\begin{align*}(-1)^{r'(\omhat)+n(\omhat^*)+n+1}\H_\omhat(-q)&=(-1)^{n(\omhat')+r'(\omhat)}\sum_i\sum_\muhat (-1)^{r'(\muhat)}c_\omhat(M_\muhat)c'_\muhat(\bM^i_n) q^i\\
&=(-1)^{n(\omhat')+r(\omhat)}\sum_i\sum_\muhat c_\omhat(M_\muhat)c'_\muhat(\bM^i_n) q^i
\end{align*}
since 

$$
r'(\muhat)+r'(\omhat)\equiv r(\omhat)\mod 2.
$$

By Theorem \ref{genmulti}(2), to complete the proof of Theorem \ref{maintheo} we are thus reduced to proving the identity

\begin{equation}
\sum_\muhat c_\omhat(M_\muhat)c'_\muhat(\bM^i_n)=c'_\omhat(\bM^i_n).
\label{tech}\end{equation}
The $\bS_n'$-module $\bM^i_n$ decomposes as

$$
\bM^i_n=\bigoplus_{\muhat\in(\calP_n)^k} {\rm Hom}_{\bS_n}(M_\muhat,\bM^i_n)\otimes M_\muhat
$$
where $\bS_n$ acts on $M_\muhat$ and $\langle\iota\rangle$ acts on
${\rm Hom}_{\bS_n}(M_\muhat,\bM^i_n)$. Hence
$$
{\rm Hom}_{S_\omhat}(M_\omhat,\bM^i_n)\simeq \bigoplus_\muhat\left({\rm Hom}_{S_\omhat}(M_\omhat,M_\muhat)\otimes_{\overline{\Q}_\ell}{\rm Hom}_{\bS_n}(M_\muhat,\bM^i_n)\right).
$$
and the action of $v_{\omhat^*}\,\iota$ on the left corresponds to $v_{\omhat^*}\otimes\iota$ on the right, hence the identity (\ref{tech}).
\end{nothing}

\section{Module theoretic interpretation of the polynomials $\Tau_{{\bf \mu}}(u,q)$}

\subsection{Exp of graded modules}

Assume given a module 

$$
\bH^\bullet=\bigoplus_{n\geq1}\bH^\bullet_n
$$
where $\bH^\bullet_n$ is a $q$-graded finite-dimensional $\bS_n$-module and denote by 

$$
{\rm ch}(\bH^\bullet):=\sum_{n\geq 1}\sum_{\muhat\in(\calP_n)^k}\sum_ic_\muhat(\bH^i_n)q^is_\muhat T^n
$$
its $q$-graded Frobenius characteristic function. 

For each $n>0$ define the $q$-graded $\bS_n$-module 

\begin{equation}
\widetilde{\bH}^\bullet_n:=\bigoplus_{\lambda\in\calP_n}{\rm Ind}_{{\bf N}_\lambda}^{\bS_n}\left(\bH^\bullet_\lambda\right)
\label{H}\end{equation}
where for a partition $\lambda=(1^{r_1},2^{r_2},\dots)$ of $n$ we put

$$
{\bf N}_\lambda:=\left(\prod_i(\bS_i)^{r_i}\right)\rtimes \prod_iS_{r_i},\hspace{1cm}\bH^\bullet_\lambda:=\boxtimes_i\left(\bH^\bullet_i\right)^{\boxtimes r_i}
$$
and $S_{r_i}$ acts by permutation of the coordinates on $(\bS_i)^{r_i}$ and $(\bH^\bullet_i)^{\boxtimes r_i}$. 

Notice that ${\bf N}_\lambda$ can be seen as a subgroup of the normalizer of $\prod_i(\bS_i)^{r_i}$ in $\bS_n$ (and so is a subgroup of $\bS_n$).
\bigskip

Following Getzler \cite{G} we prove the following result.

\begin{theorem}Put

$$
\Exp(\bH^\bullet):=\bigoplus_{n\geq 0}\widetilde{\bH}^\bullet_n.
$$
Then
$$
{\rm ch}\left(\Exp(\H^\bullet)\right)=\Exp({\rm ch}(\bH^\bullet)).
$$
\label{G}\end{theorem}

%
%
%

Now let $\mathcal{L}$ be the non-trivial irreducible module of $\Z/2\Z=\langle\iota\rangle$ and define the $q$-graded $\bS'_n$-module ${\bf H}^\bullet_n$ as

$$
{\bf H}^\bullet=\mathcal{L}\boxtimes \bH^\bullet.
$$

%

Extend the definition of the $q$-graded Frobenius characteristic map ${\rm ch}$ to $\bS'_n$-modules by mapping the irreducible modules $\mathcal{L}\boxtimes H_\muhat$ to $us_\muhat$.

Then

\begin{equation}
{\rm ch}({\bf H}^\bullet)=u\,{\rm ch}(\bH^\bullet).
\label{ch}\end{equation}

Replacing $\bH^\bullet_\lambda$ by ${\bf H}_\lambda^\bullet$ in (\ref{H}) we get

\begin{align*}
\widetilde{{\bf H}}^\bullet_n:&=\bigoplus_{\lambda\in\calP_n}{\rm Ind}_{{\bf N}_\lambda}^{\bS_n}\left({\bf H}^\bullet_\lambda\right)\\
&=\bigoplus_{\lambda\in\calP_n}\mathcal{L}^{\ell(\lambda)}\boxtimes{\rm Ind}_{{\bf N}_\lambda}^{\bS_n}\left(\bH^\bullet_\lambda\right)
\end{align*}

Put

$$
\Exp({\bf H}^\bullet):=\bigoplus_{n\geq 0}\widetilde{\bf H}^\bullet_n.
$$
Then

$$
{\rm ch}(\Exp({\bf H}^\bullet))=\sum_{n\geq 0}\sum_{\muhat\in(\calP_n)^k}\sum_{\lambda\in\calP_n}\sum_ iu^{\ell(\lambda)}c_\muhat\left({\rm Ind}_{{\bf N}_\lambda}^{\bS_n}(\bH^i_\lambda)\right)q^is_\muhat T^n.
$$
Theorem \ref{G} extends as

\begin{equation}
{\rm ch}(\Exp({{\bf H}}^\bullet))=\Exp({\rm ch}({\bf H}^\bullet)).
\label{G'}\end{equation}

\subsection{Module theoretic interpretation of the unipotent multiplicities} 

In this section we apply the results of the above section with $\bH^\bullet=\bM^\bullet$.

\begin{theorem}We have
$$
{\rm ch}(\Exp({\bf M}^\bullet))=1+u\sum_{n>0}\sum_{\muhat\in(\calP_n)^k}\Tau_\muhat(u,q) s_\muhat T^n
$$
and so

\begin{equation}
\Tau_\muhat(u,q)=\sum_{\lambda\in\calP_n}\sum_i u^{\ell(\lambda)-1}c_\muhat\left({\rm Ind}_{{\bf N}_\lambda}^{\bS_n}(\bM^i_\lambda)\right)q^i.
\label{tau-for}\end{equation}
In particular the polynomials $\Tau_\muhat(u,q)$ have non-negative integer coefficients.
\label{prop-tau}\end{theorem}

\begin{proof}Applying $\log$ to Formula (\ref{tau}) we get

$$
\sum_{d\geq 1}\Phi_d(u,q)\log\left(\Omega(\x_1^d,\dots,\x_k^d,q^d;T^d)\right)=\log\left(1+u\sum_{n>0}\sum_{\muhat\in(\calP_n)^k}\Tau_\muhat(u,q)s_\muhat T^n\right)
$$
We apply Lemma \ref{moz} with $h=u(q-1)$ so that $h_d=\Phi_d(u,q)$, and we deduce that
$$
u(q-1)\Log\left(\Omega(q)\right)=\Log\left(1+u\sum_{n>0}\sum_{\muhat\in(\calP_n)^k}\Tau_\muhat(u,q)s_\muhat T^n\right)
$$
and so 

$$
1+u\sum_{n>0}\sum_{\muhat\in(\calP_n)^k}\Tau_\muhat(u,q)s_\muhat T^n=\Exp\left(u(q-1)\Log\,\Omega(q)\right).
$$
The theorem is thus a consequence of Formula (\ref{G'}) together with the following theorem.
\end{proof}

\begin{theorem}We have

$$
\ch\left({\bf M}^\bullet\right)=u(q-1)\Log\,\Omega(q).
$$
\end{theorem}

\begin{proof} By Formula (\ref{ch}) we are reduced to proving that

\begin{equation}
\ch(\bM^\bullet)=(q-1)\Log\, \Omega(q).
\label{B}\end{equation}
We have

$$
\ch(\bM^\bullet)=\sum_{n\geq 1}\sum_{\muhat\in(\calP_n)^k}\sum_ic_\muhat(\bM^i_n)q^is_\muhat T^n
$$
and so Formula (\ref{B}) follows from Formula (\ref{A}).
\end{proof}

From Theorem \ref{prop-tau} together with Theorem \ref{EnnolaII} we deduce the following.

\begin{theorem}For any multi-partition $\muhat\in(\calP_n)^k$ we have

$$
U_\muhat(q)=\sum_{\lambda\in\calP_n}\sum_i c_\muhat\left({\rm Ind}_{{\bf N}_\lambda}^{\bS_n}(\bM^i_\lambda)\right)q^i,\hspace{1cm}(-1)^{\frac{1}{2}d_\muhat+n}U'_\muhat(q)=\sum_{\lambda\in\calP_n}\sum_i (-1)^{\ell(\lambda)+i-1}c_\muhat\left({\rm Ind}_{{\bf N}_\lambda}^{\bS_n}(\bM^i_\lambda)\right)q^i.
$$

\end{theorem}

\subsection{Proof of Theorem \ref{K-coeff}}\label{PK}

The constant term in $u$ in (\ref{tau-for}) corresponds to the partition $\lambda=(n^1)$  and 
$$
{\rm Ind}_{{\bf N}_{(n^1)}}^{\bS_n}(\bM^\bullet_{(n^1)})=\bM^\bullet_n.
$$
The assertion (i) follows thus from Proposition \ref{prop-tau} together with Theorem \ref{maintheo}.

The term of degree $n-1$ in $u$ in $\Tau_\muhat(u,q)$ corresponds to the longest partition $\lambda=(1^n)$. In this case $\bM^\bullet_\lambda$ is the trivial module of ${\bf N}_{(1^n)}\simeq S_n$ (embedded diagonally in $\bS_n$) and so  $c_\muhat\left({\rm Ind}_{{\bf N}_{(1^n)}}^{\bS_n}(\bM^\bullet_{(1^n)})\right)$ is the Kronecker coefficient $\langle\chi^{\mu^1}\otimes\cdots\otimes\chi^{\mu^k},1\rangle_{S_n}$ where $(\mu^1,\dots,\mu^k)=\muhat$.

\section{Examples}
\label{examples}
In this section we give a few explicit values for the polynomials
$V_\muhat(q),V_\muhat'(q),U_\muhat(q),U_\muhat'(q)$  for small values of $n$. Note that of the first two we
only need to list $V_\muhat(q)$ since we easily obtain $V'_\muhat(q)$
by Ennola duality (see Corollary \ref{Ennola-gen}).  To compute these polynomials we
implement in PARI-GP~\cite{PARI-GP} the infinite
products~\eqref{inf-prod-GL} and~\eqref{inf-prod-U} involving the
series $\Omega(\x,q;T)$ (here $\x$ stands collectively for the $k$ set
of infinite variables $(\x_1,\ldots,\x_k)$). The series
$\Omega(\x,q;T)$ itself was computed using code in Sage~\cite{sage}
written by A.~Mellit. The values we obtain for
$U_\muhat(q), U_\muhat'(q)$ match those in the tables in~\cite{mattig}
(but see Remark~\ref{sign-issue} below).

Concretely, define the rational functions $R_n(\x,q)\in \Lambda$ via
the expansion
$$
\log \Omega(\x,q;T)=\sum_{n\geq 1} R_n(\x,q)T^n.
$$
Then by~\eqref{inf-prod-GL} and~\eqref{inf-prod-U} we have 
\begin{equation}
\log\left(1+\sum_{n>0}\sum_{\muhat\in(\calP_n)^k}U_\muhat(q)s_\muhat T^n\right)
=\sum_{n\geq 1} \sum_{d\mid n} \Phi_d(q)R_{n/d}(\x^d,q^d)T^n
\end{equation}
and
\begin{equation}
\begin{split}
\log\left(1+\sum_{n>0}\sum_{\muhat\in(\calP_n)^k}U'_\muhat(q)s_\muhat T^n\right)
&=\sum_{n\geq 1} \sum_{d\mid
  n}(-1)^{n/d}\Phi'_d(q)R_{n/d}(\x^d,-q^d)T^n\\
&+
\sum_{n\geq 1} \sum_{d\mid n}
\Phi'_{2d}(q) R_{n/d}(\x^{2d},q^{2d})T^{2n}\\
&-\sum_{d\mid n}(-1)^{n/d}\Phi'_{2d}(q)R_{n/d}(\x^{2d},-q^{2d})T^{2n}
\end{split}
\end{equation}

\begin{remark}
\label{sign-issue}
As L\"ubeck points out the polynomials $U'_\muhat(q)$ do not
in general have non-negative coefficients. However, their values at
powers of primes must be non-negative as they give multiplicities of
tensor product of characters of a finite group. Hence, the coefficient of the highest power of $q$ must be positive.
\end{remark}

\begin{center}
\begin{tabular}{LLL|L}
\mu^1&\mu^2&\mu^3& V_\muhat\\
\hline
(1^2)&  (1^2)&  (1^2)&   1\\
\hline
(1^3)&  (1^3)&  (1^3)&   q\\
(1^3)&  (1^3)&  (2, 1)&   1\\
\hline
(1^4)&  (1^4)&  (1^4)&   q^3 + q\\
(1^4)&  (1^4)&  (21^2)&   q^2 + q + 1\\
(1^4)&  (1^4)&  (2^2)&   q\\
(1^4)&  (1^4)&  (3, 1)&   1\\
(1^4)&  (21^2)&  (21^2)&   q + 1\\
(1^4)&  (21^2)&  (2^2)&   1\\
(21^2)&  (21^2)&  (21^2)&   1\\
\hline
(1^5)&  (1^5)&  (1^5)&   q^6 + q^4 + q^3 + q^2 + q\\
(1^5)&  (1^5)&  (21^3)&   q^5 + q^4 + 2q^3 + 2q^2 + 2q + 1\\
(1^5)&  (1^5)&  (2^21)&   q^4 + q^3 + 2q^2 + 2q + 1\\
(1^5)&  (1^5)&  (31^2)&   q^3 + q^2 + 2q + 1\\
(1^5)&  (1^5)&  (3, 2)&   q^2 + q + 1\\
(1^5)&  (1^5)&  (4, 1)&   1\\
(1^5)&  (21^3)&  (21^3)&   q^4 + 2q^3 + 3q^2 + 4q + 2\\
(1^5)&  (21^3)&  (2^21)&   q^3 + 2q^2 + 3q + 2\\
(1^5)&  (21^3)&  (31^2)&   q^2 + q + 2\\
(1^5)&  (21^3)&  (3, 2)&   q + 1\\
(1^5)&  (2^21)&  (2^21)&   q^2 + 2q + 2\\
(1^5)&  (2^21)&  (31^2)&   q + 1\\
(1^5)&  (2^21)&  (3, 2)&   1\\
(21^3)&  (21^3)&  (21^3)&   q^3 + 3q^2 + 4q + 4\\
(21^3)&  (21^3)&  (2^21)&   q^2 + 3q + 3\\
(21^3)&  (21^3)&  (31^2)&   q + 1\\
(21^3)&  (21^3)&  (3, 2)&   1\\
(21^3)&  (2^21)&  (2^21)&   q + 2\\
(21^3)&  (2^21)&  (31^2)&   1\\
(2^21)&  (2^21)&  (2^21)&   1\\
\end{tabular}
\end{center}

\begin{center}
\begin{tabular}{LLL|L}
\mu^1&\mu^2&\mu^3& U_\muhat\\
\hline
(1) & (1) & (1) &   1\\
\hline
(1^2) & (1^2) & (1^2) &  1\\
(1^2) & (1^2) &  (2) &   1\\
(2) & (2) & (2) &   1\\
\hline
(1^3) & (1^3) & (1^3) &  q + 1\\
(1^3) & (1^3) & (2, 1) &   2\\
(1^3) & (1^3) & (3) &   1\\
(1^3) & (2, 1) & (2, 1) &   2\\
(2, 1) & (2, 1) & (2, 1) &   2\\
(2, 1) & (2, 1) & (3) &   1\\
(3) & (3) & (3) &   1\\
\hline
(1^4) & (1^4) & (1^4) &  q^3 + 2q + 1\\
(1^4) & (1^4) & (21^2) &  q^2 + 2q + 3\\
(1^4) & (1^4) & (2^2) &  q + 2\\
(1^4) & (1^4) & (3, 1) &   3\\
(1^4) & (1^4) & (4) &   1\\
(1^4) & (21^2) & (21^2) &  2q + 6\\
(1^4) & (21^2) & (2, 2) &   3\\
(1^4) & (21^2) & (3, 1) &   3\\
(1^4) & (2^2) & (2^2) &  2\\
(1^4) & (2^2) & (3, 1) &   1\\
(21^2) & (21^2) & (21^2) &  q + 9\\
(21^2) & (21^2) & (2^2) &  5\\
(21^2) & (21^2) & (3, 1) &   4\\
(21^2) & (21^2) & (4) &   1\\
(21^2) & (2^2) & (2^2) &  1\\
(21^2) & (2^2) & (3, 1) &   2\\
(21^2) & (3, 1) & (3, 1) &   2\\
(2^2) & (2^2) & (2^2) &  2\\
(2^2) & (2^2) & (3, 1) &   1\\
(2^2) & (2^2) & (4) &   1\\
(2^2) & (3, 1) & (3, 1) &   1\\
(3, 1) & (3, 1) & (3, 1) &   2\\
(3, 1) & (3, 1) & (4) &   1\\
(4) & (4) & (4) &   1\\
\end{tabular}
\end{center}

\begin{center}
\begin{tabular}{LLL|L}
\mu^1&\mu^2&\mu^3& U_\muhat\\
\hline
(1^5) & (1^5) & (1^5) &  q^6 + q^4 + 2q^3 + q^2 + 3q + 1\\
(1^5) & (1^5) & (21^3) &  q^5 + q^4 + 3q^3 + 3q^2 + 6q + 4\\
(1^5) & (1^5) & (2^21) &  q^4 + q^3 + 3q^2 + 5q + 5\\
(1^5) & (1^5) & (31^2) &  q^3 + 2q^2 + 4q + 6\\
(1^5) & (1^5) & (3, 2) &   q^2 + 2q + 5\\
(1^5) & (1^5) & (4, 1) &   4\\
(1^5) & (1^5) & (5) &   1\\
(1^5) & (21^3) & (21^3) &  q^4 + 3q^3 + 5q^2 + 11q + 12\\
(1^5) & (21^3) & (2^21) &  q^3 + 3q^2 + 8q + 12\\
(1^5) & (21^3) & (31^2) &  2q^2 + 4q + 12\\
(1^5) & (21^3) & (3, 2) &   2q + 8\\
(1^5) & (21^3) & (4, 1) &   4\\
(1^5) & (2^21) & (2^21) &  q^2 + 4q + 12\\
(1^5) & (2^21) & (31^2) &  3q + 9\\
(1^5) & (2^21) & (3, 2) &   7\\
(1^5) & (2^21) & (4, 1) &   2\\
(1^5) & (31^2) & (31^2) &  q + 6\\
(1^5) & (31^2) & (3, 2) &   3\\
(1^5) & (3, 2) & (3, 2) &   2\\
\end{tabular}
\end{center}

\begin{center}
\begin{tabular}{LLL|L}
\mu^1&\mu^2&\mu^3& U_\muhat\\
\hline
(21^3) & (21^3) & (21^3) &  2q^3 + 6q^2 + 16q + 28\\
(21^3) & (21^3) & (2^21) &  2q^2 + 10q + 26\\
(21^3) & (21^3) & (31^2) &  q^2 + 6q + 21\\
(21^3) & (21^3) & (3, 2) &   q + 15\\
(21^3) & (21^3) & (4, 1) &   6\\
(21^3) & (21^3) & (5) &   1\\
(21^3) & (2^21) & (2^21) &  4q + 22\\
(21^3) & (2^21) & (31^2) &  2q + 18\\
(21^3) & (2^21) & (3, 2) &   10\\
(21^3) & (2^21) & (4, 1) &   4\\
(21^3) & (31^2) & (31^2) &  2q + 12\\
(21^3) & (31^2) & (3, 2) &   8\\
(21^3) & (31^2) & (4, 1) &   3\\
(21^3) & (3, 2) & (3, 2) &   4\\
(21^3) & (3, 2) & (4, 1) &   1\\
(2^21) & (2^21) & (2^21) &  q + 17\\
(2^21) & (2^21) & (31^2) &  q + 13\\
(2^21) & (2^21) & (3, 2) &   8\\
(2^21) & (2^21) & (4, 1) &   4\\
(2^21) & (2^21) & (5) &   1\\
(2^21) & (31^2) & (31^2) &  11\\
(2^21) & (31^2) & (3, 2) &   6\\
(2^21) & (31^2) & (4, 1) &   2\\
(2^21) & (3, 2) & (3, 2) &   4\\
(2^21) & (3, 2) & (4, 1) &   2\\
(31^2) & (31^2) & (31^2) &  q + 10\\
(31^2) & (31^2) & (3, 2) &   7\\
(31^2) & (31^2) & (4, 1) &   4\\
(31^2) & (31^2) & (5) &   1\\
(31^2) & (3, 2) & (3, 2) &   3\\
(31^2) & (3, 2) & (4, 1) &   2\\
(31^2) & (4, 1) & (4, 1) &   2\\
(3, 2) & (3, 2) & (3, 2) &   3\\
(3, 2) & (3, 2) & (4, 1) &   2\\
(3, 2) & (3, 2) & (5) &   1\\
(3, 2) & (4, 1) & (4, 1) &   1\\
(4, 1) & (4, 1) & (4, 1) &   2\\
(4, 1) & (4, 1) & (5) &   1\\
(5) & (5) & (5) &   1\\
\end{tabular}
\end{center}

\begin{center}
\begin{tabular}{LLL|L}
\mu^1&\mu^2&\mu^3& U'_\muhat\\
\hline
(1) & (1) & (1) &  1\\
\hline
(1^2) & (1^2) & (1^2) &  1\\
(1^2) & (1^2) & (2) &  1\\
(2) & (2) & (2) &  1\\
\hline
(1^3) & (1^3) & (1^3) &  q + 1\\
(1^3) & (1^3) & (3) &  1\\
(2, 1) & (2, 1) & (3) &  1\\
(3) & (3) & (3) &  1\\
\hline
(1^4) & (1^4) & (1^4) &  q^3 + 1\\
(1^4) & (1^4) & (21^2) &  q^2 + 1\\
(1^4) & (1^4) & (2^2) &  q + 2\\
(1^4) & (1^4) & (3, 1) &  1\\
(1^4) & (1^4) & (4) &  1\\
(1^4) & (21^2) & (2^2) &  1\\
(1^4) & (21^2) & (3, 1) &  1\\
(1^4) & (2^2) & (2^2) &  2\\
(1^4) & (2^2) & (3, 1) &  1\\
(21^2) & (21^2) & (21^2) &  q + 1\\
(21^2) & (21^2) & (2^2) &  1\\
(21^2) & (21^2) & (4) &  1\\
(21^2) & (2^2) & (2^2) &  1\\
(2^2) & (2^2) & (2^2) &  2\\
(2^2) & (2^2) & (3, 1) &  1\\
(2^2) & (2^2) & (4) &  1\\
(2^2) & (3, 1) & (3, 1) &  1\\
(3, 1) & (3, 1) & (4) &  1\\
(4) & (4) & (4) &  1\\
\hline
(1^5) & (1^5) & (1^5) &  q^6 + q^4 + q^2 + q + 1\\
(1^5) & (1^5) & (21^3) &  q^5 - q^4 + q^3 - q^2\\
(1^5) & (1^5) & (2^21) &  q^4 - q^3 + q^2 + q + 1\\
(1^5) & (1^5) & (31^2) &  q^3 + 2q + 2\\
(1^5) & (1^5) & (3, 2) &  q^2 + 1\\
(1^5) & (1^5) & (5) &  1\\
(1^5) & (21^3) & (21^3) &  q^4 - q^3 + q^2 - q\\
(1^5) & (21^3) & (2^21) &  q^3 - q^2\\
(1^5) & (2^21) & (2^21) &  q^2\\
(1^5) & (2^21) & (31^2) &  q + 1\\
(1^5) & (2^21) & (3, 2) &  1\\
(1^5) & (31^2) & (31^2) &  q + 2\\
(1^5) & (31^2) & (3, 2) &  1\\
\end{tabular}
\end{center}

\begin{center}
\begin{tabular}{LLL|L}
\mu^1&\mu^2&\mu^3& U'_\muhat\\
\hline
(21^3) & (21^3) & (31^2) &  q^2 + 1\\
(21^3) & (21^3) & (3, 2) &  q + 1\\
(21^3) & (21^3) & (5) &  1\\
(21^3) & (31^2) & (4, 1) &  1\\
(21^3) & (3, 2) & (4, 1) &  1\\
(2^21) & (2^21) & (2^21) &  q + 1\\
(2^21) & (2^21) & (31^2) &  q + 1\\
(2^21) & (2^21) & (5) &  1\\
(2^21) & (31^2) & (31^2) &  1\\
(31^2) & (31^2) & (31^2) &  q + 2\\
(31^2) & (31^2) & (3, 2) &  1\\
(31^2) & (31^2) & (5) &  1\\
(31^2) & (3, 2) & (3, 2) &  1\\
(3, 2) & (3, 2) & (3, 2) &  1\\
(3, 2) & (3, 2) & (5) &  1\\
(3, 2) & (4, 1) & (4, 1) &  1\\
(4, 1) & (4, 1) & (5) &  1\\
(5) & (5) & (5) &  1\\
\end{tabular}
\end{center}

\end{document}